\numberwithin{equation}{section}
\newtheorem{theorem}{Theorem}[section]
\newtheorem{lemma}[theorem]{Lemma}
\theoremstyle{definition} 
\newtheorem{definition}[theorem]{Definition}
\theoremstyle{plain} 
\newtheorem{proposition}[theorem]{Proposition}
\newtheorem*{maintheorem*}{Main Theorem}
\newtheorem*{conjecture*}{Conjecture}
\newtheorem*{theorem*}{Theorem}
\newtheorem*{proposition*}{Proposition}
\newtheorem*{corollary*}{Corollary}
\newtheorem{observation}[theorem]{Observation}
\theoremstyle{remark}  
\newtheorem{remark}[theorem]{Remark}
\newtheorem*{remarks*}{Remarks}
\newtheorem*{remark*}{Remark}
\newtheorem*{claim*}{Claim}
\newtheoremstyle{mystyle}
  {3pt}{3pt}{\itshape}{}{\bfseries}{}{.5em}
  {\thmname{#1}\thmnumber{ #2}\thmnote{. #3}}
\theoremstyle{mystyle}
\newtheorem*{maintheoremnew*}{Main Theorem}
\newtheorem*{conjecturenew*}{Conjecture}
\newtheorem*{theoremnew*}{Theorem}
\newtheorem*{propositionnew*}{Proposition}
\newcommand{\nc}{\newcommand}
\nc{\nothing}[1]{}
\nc{\dom}{{\rm dom}}
\nc{\card}{{\rm card}}
\nc{\lh}{{\rm lh}}
\nc{\lgg}{{\rm lg}}
\nc{\rge}{\mbox{\rm range}}
\nc{\cf}{{\rm cf}}
\nc{\uhr}{\restriction}
\nc{\supt}{{\rm supt}}
\nc{\supp}{{\rm supp}}
\nc{\Lim}{{\rm Lim}}
\nc{\Leb}{{\rm Leb}}
\nc{\modd}{{\rm mod}}
\nc{\RO}{{\rm RO}}
\nc{\prob}{{\rm Prob}}
\nc{\On}{{\rm On}}
\nc{\Ord}{{\rm On}}
\nc{\nco}{\DeclareMathOperator}
\nco{\rk}{rk}
\nco{\order}{o}
\nco{\ppower}{pp}
\nco{\pcf}{pcf} 
\nco{\tcf}{tcf} 
\nco{\tlim}{tlim} 
\nco{\limtext}{lim} 
\nco{\prodt}{{\textstyle \prod}}
\nco{\symdiff}{\triangle}
\nco{\dom}{dom}
\nco{\card}{card}
\nco{\lh}{lh}
\nco{\lt}{lt}
\nco{\lgg}{lg}
\nco{\hgt}{ht}
\nco{\rge}{range}
\nco{\otp}{otp}
\nco{\trunk}{tr}
\nco{\cf}{cf}
\nco{\nex}{next}
\nc{\uhr}{\restriction}
\nco{\reduction}{red}
\nco{\supt}{supt}
\nco{\supp}{supp}
\nco{\Lim}{Lim}
\nco{\Leb}{Leb}
\nco{\modd}{mod}
\nco{\invariant}{inv}
\nco{\id}{id}
\nco{\RO}{RO}
\nco{\poss}{pos}
\nco{\Inc}{Inc} 
\nco{\Ge}{Ge}
\nco{\hdrop}{\hat{drop}}
\nco{\Gen}{Gen}
\nco{\Property}{Pr}
\nco{\GT}{GT}
\nco{\refl}{refl}
\nco{\nmg}{nmg}
\nco{\en}{en}
\nco{\truth}{truth} 
\nco{\sw}{sw} 
\nc{\potom}{\ensuremath{{\cal P}(\omega)}}
\nc{\potinf}{\ensuremath{[\omega]^\omega}}
\nc{\pfin}{\ensuremath{{\cal P}(\omega)/{\rm fin}}}
\nc{\potfin}{\ensuremath{[\omega]^{<\omega}}}
\nc{\inn}{\ensuremath{{\omega^{\uparrow \omega}}}}
\nc{\baire}{{}^\omega \omega}
\nc{\bair}{\omega^\omega}
\nc{\hoch}{^{<\omega}}
\nc{\hocho}{^{\omega}}
\nc{\tree}[1]{{[} #1 {]}_0}
\nc{\tre}[2]{ {#1}_{#2}}
\nc{\prooff}[1]{{\bf Proof} of #1:}
\nc{\proofend}{\makebox{} \hfill ${\bf \square}$ \\}
\nc{\proofendof}[1]{\makebox{} \hfill ${\boldmath{\square}}_{\rm #1}$ \\}
\nc{\beq}{\begin{eqnarray*}}
\nc{\eeq}{\end{eqnarray*}}
\nc{\bde}{\begin{list}}
\nc{\ede}{\end{list}}
\newenvironment{myrules}
{\begin{list}{}
{
 \setlength{\leftmargin}{0.8cm}
 \setlength{\labelwidth}{0.8cm}
 \setlength{\labelsep}{0.2cm}
 \setlength{\parsep}{0.5ex plus 0.2ex minus 0.1 ex}
 \setlength{\itemsep}{0.3ex plus 0.2 ex minus 0ex}
}}{\end{list}}
\newcounter{subalph}
{\end{list}}
\newcommand{\greek}[1]{\ifthenelse{\value{#1}=1}{\mbox{$\alpha$}}%
  {\ifthenelse{\value{#1}=2}{\mbox{$\beta$}}{%
   \ifthenelse{\value{#1}=3}{\mbox{$\gamma$}}{%
   \ifthenelse{\value{#1}=4}{\mbox{$\delta$}}{%
   \ifthenelse{\value{#1}=5}{\mbox{$\varepsilon$}}{%
   \ifthenelse{\value{#1}=6}{\mbox{$\zeta$}}{%
   \ifthenelse{\value{#1}=7}{\mbox{$\eta$}}{%
   \ifthenelse{\value{#1}=8}{\mbox{$\theta$}}{%
   \ifthenelse{\value{#1}=9}{\mbox{$\iota$}}{%
   \ifthenelse{\value{#1}=10}{\mbox{$\kappa$}}{%
   \ifthenelse{\value{#1}=11}{\mbox{$\lambda$}}{%
   \ifthenelse{\value{#1}=12}{\mbox{$\mu$}}{%
   \ifthenelse{\value{#1}=13}{\mbox{$\nu$}}{%
   \ifthenelse{\value{#1}=14}{\mbox{$\xi$}}{%
   \ifthenelse{\value{#1}=15}{\mbox{$\rm o$}}{%
   \ifthenelse{\value{#1}=16}{\mbox{$\pi$}}{%
   \ifthenelse{\value{#1}=17}{\mbox{$\varrho$}}{%
   \ifthenelse{\value{#1}=18}{\mbox{$\sigma$}}{%
   \ifthenelse{\value{#1}=19}{\mbox{$\tau$}}{%
   \ifthenelse{\value{#1}=20}{\mbox{$\upsilon$}}{%
   \ifthenelse{\value{#1}=21}{\mbox{$\varphi$}}{%
   \ifthenelse{\value{#1}=22}{\mbox{$\chi$}}{%
   \ifthenelse{\value{#1}=23}{\mbox{$\psi$}}{\mbox{$\omega$}%
  }}}}}}}}}}}}}}}}}}}}}}}}
\newcounter{subgreek}
{\end{list}}
\newcounter{subarabic}
{\end{list}}
\newcounter{subroman}
{\end{list}}
\def\mathunderaccent#1#2 {\let\theaccent#1\skewfactor#2
\mathpalette\putaccentunder}
\def\putaccentunder#1#2{\oalign{$#1#2$\crcr\hidewidth
\vbox to.2ex{\hbox{$#1\skew\skewfactor\theaccent{}$}\vss}\hidewidth}}
\def\name{\mathunderaccent\tilde-3 }
\nc{\nname}{\name}
\nc{\even}{\ensuremath{\rm Even}}
\nc{\odd}{\ensuremath{\rm Odd}}
\nc{\al}{$\alpha$\  }
\nc{\om}{\omega}
\nc{\omm}{\ensuremath{\omega_1}}
\nc{\ep}{\varepsilon}
\nc{\tk}{\tilde{K}}
\nc{\concat}{\,\hat{} \,}   
\nc{\force}{\Vdash}
\nc{\fb}{f_{\overline{M}}}
\nc{\such}{\, : \,}
\newcommand{\la}{\langle}
\newcommand{\ra}{\rangle}
\nc{\meager}{\ensuremath{{\cal M}}}
\nc{\lebesgue}{\ensuremath{{\cal N}}}
\nc{\nulll}{\ensuremath{{\cal N}}}
\nc{\ksigma}{\ensuremath{{\bf K}_\sigma}}
\nc{\ideal}{\ensuremath{{\cal I}}}
\nc{\ga}{\ensuremath{\frak a}}
\nc{\AAA}{{\cal A}}   
\nc{\gc}{\ensuremath{\frak c}}
\nc{\gs}{\ensuremath{\frak s}}
\nc{\gh}{\ensuremath{\frak h}}
\nc{\gd}{\ensuremath{\frak d}}
\nc{\gb}{\ensuremath{\frak b}}
\nc{\gro}{\ensuremath{\frak g}}
\nc{\gu}{\ensuremath{\frak u}}
\nc{\gr}{\ensuremath{\frak r}}
\nc{\gt}{\ensuremath{\frak t}}
\nc{\fff}{\ensuremath{\frak f}}
\nc{\gm}{\ensuremath{\mathfrak{mcf}}}
\nc{\gge}{\ensuremath{\mathfrak e}}
\nc{\cfupro}{\ensuremath{\cf(\upro)}}
\nc{\cfvpro}{\ensuremath{\cf(\vpro)}}
\nc{\gp}{\ensuremath{\frak p}}
\nc{\gk}{\ensuremath{\frak k}}
\nc{\add}{\mbox{\ensuremath{{\rm add}}}}
\nc{\cov}[1]{\mbox{\ensuremath{{\rm cov}(#1)}}}
\nc{\unif}[1]{\mbox{\ensuremath{{\rm unif}(#1)}}}
\nc{\cof}[1]{{\mbox{\ensuremath{\rm cof}(#1)}}}
\nc{\addd}[2]{\mbox{\ensuremath{{\rm add}^{#1}(#2)}}}   
\nc{\covv}[2]{\mbox{\ensuremath{{\rm cov}^{#1}(#2)}}}   
\nc{\uniff}[2]{\mbox{\ensuremath{{\rm unif}^{#1}(#2)}}} 
\nc{\coff}[2]{{\mbox{\ensuremath{\rm cof}^{#1}(#2)}}}
\nc{\cd}{Cicho\'n's Diagram}
\nc{\MA}{\mbox{\sf MA}}
\nc{\PFA}{\mbox{\sf PFA}}
\nc{\OCA}{\mbox{\sf OCA}}
\nc{\GCH}{\mbox{\sf GCH}}
\nc{\CH}{\mbox{\sf CH}}
\nc{\zfc}{\mbox{\sf ZFC}}
\nc{\sch}{\mbox{\sf SCH}}
\nc{\ZF}{\mbox{\sf ZF}}
\nc{\NCF}{\mbox{\sf NCF}} 
\nc{\FD}{\mbox{\sf FD}}   
\nc{\SFT}{\mbox{\sf SFT}}
\nc{\fourG}{\mbox{\rm 4G}}
\nc{\fourI}{\mbox{\rm 4I}}
\nc{\past}{\ ;{\rm past}\;}
\nc{\Borelhood}{Borel measurability} 
\nc{\Pieinseins}{\mbox{${\bf \Pi}^1_1$}}
\nc{\seinseins}{\mbox{${\bf\Sigma}^1_1$}}
\nc{\seinszwei}{\mbox{${\bf\Sigma}^1_2$}}
\nc{\seinsdrei}{\mbox{${\bf\Sigma}^1_3$}}
\nc{\Deleinszwei}{\mbox{${\bf\Delta}^1_2$}}
\nc{\up}{\ensuremath{{\cal U}\mbox{\ensuremath{\rm -prod}}\,\omega}}
\nc{\upp}{\ensuremath{{\cal U}'\mbox{\ensuremath{\rm -prod}}\,\omega}}
\nc{\upro}{\ensuremath{{\cal U}\mbox{\ensuremath{\rm -prod}}\,\om}}
\nc{\fupro}{\ensuremath{f({\cal U})\mbox{\ensuremath{\rm -prod}}\,\om}}
\nc{\vpro}{\ensuremath{{\cal V}\mbox{\ensuremath{\rm -prod}}\,\om}}
\nc{\fpro}{\ensuremath{{\cal F}\mbox{\ensuremath{\rm -prod}}\,\om}}
\nc{\cff}[1]{{\text{cf}\,(#1)}}           
\nc{\cu}{\ensuremath{\cal U}}             
\nc{\ai}{\ensuremath{\forall^\infty}}     
\nc{\ei}{\ensuremath{\exists^\infty}}     
\nc{\ww}{\ensuremath{\omega^\omega}}      
\nc{\gw}{groupwise dense}
\nc{\kk}{car\-dinal cha\-rac\-teris\-tic}
\nc{\joker}{\ast}
\nc{\gtc}{Galois-Tukey connection} 
\nc{\ntc}{not nearly coherent} 
\nc{\nnc}{non-nearly coherent} 
\nc{\av}[1]{{\rm Av}_{#1}}
\nc{\eps}{\varepsilon}
\renewcommand{\epsilon}{\varepsilon}
\nc{\n}{{\bf n}}                 
\nc{\m}{{\bf m}}
\nc{\bT}{{\bf T}} 
\nc{\marginparr}[1]
{\marginpar{\makebox[4mm]{} {#1}}}
\nc{\footnoteee}{} 
\nc{\footnotee}{}  
\newcommand{\cal}{\mathcal}
\nc{\divs}{{c_0 \setminus \ell^1}}
\nc{\divser}{(\divs, \leq^*)/\thickapproy}
\nc{\bfin}{\RO(\pfin \setminus\{0\},\subseteq^*)}
\nc{\bdivser}{\RO(\divser)}
\nc{\inc}{{\rm INC}}
\nc{\com}{{\rm COM}}
\nc{\thickapproy}{\makebox{}\!\!\thickapprox}
\nc{\approy}{\makebox{}\!\!\approx}
\nc{\lessi}{\leqslant}
\nc{\gessi}{\geqslant}
\nc{\interior}[1]{{\rm int}(#1)}
\nc{\closure}[1]{{\rm cl}(#1)}
\nc{\Vo}{Vojt\'a\v{s}}
\nc{\precedeseq}{\leq^*} 
\nc{\precedes}{\prec}
\nc{\stronger}{\leqslant_{\bf P}}
\nc{\underlline}[1]{\hat{#1}}
\nc{\PO}{{\bf P}}
\nc{\charak}{\text{ch}}
\nc{\symom}{{\rm{Sym}(\omega)}}
\nc{\needed}{needed\ }
\nc{\neededc}{needed}
\nc{\Needed}{Needed\ }
\nc{\wneeded}{weakly needed\ }
\nc{\Wneeded}{Weakly needed\ }
\nc{\wneededc}{weakly needed}
\nco{\last}{last}
\nc{\mup}{m_{\rm up}}
\nc{\mdn}{m_{\rm dn}}
\nco{\may}{may}
\nco{\aver}{av} 
\nco{\norm}{nor} 
\nco{\val}{val} 
\nco{\dis}{dis} 
\nco{\basis}{basis}
\nco{\pos}{pos}
\nco{\spec}{spec}
\nc{\err}{\mbox{err}}
\nc{\eee}{\mbox{e}}
\nco{\Expect}{Exp}
\nco{\rt}{rt}
\nco{\pr}{pr}
\nco{\suc}{suc}
\nco{\splitt}{spl}
\nco{\halv}{h}
\nco{\Add}{Add}
\nco{\Cov}{Cov}
\nco{\Unif}{Unif}
\nco{\Cof}{Cof}
\nco{\htt}{ht}
\nco{\cl}{cl}
\nco{\Levy}{Levy}
\nco{\set}{set}
\nc{\bbforcing}{\mathbb A}
\nc{\itername}{\mathfrak q}
\nc{\iterp}{\mathfrak p}
\nc{\iterq}{\mathfrak q}
\nc{\invcm}{\rm inv_{cm}}
\nc{\invcf}{\rm inv_{cf}}
\nc{\invgm}{\rm inv_{gm}}
\nc{\Q}{\mathbb Q}
\nc{\subsetsim}{\underset{\raise0.6em\hbox{$\sim$}}{\subset}}
\newcommand{\subsim}{\underset{\raise20pt\hbox{$\rightarrow$}}{\rightarrow}}
\newcommand{\ssim}{\overset{\raise-40pt\hbox{$\leftarrow$}}{\subsim}}
\nc{\rest}{\restriction}
\nc{\bF}{\mathbb F}\nc{\F}{\mathbb F}
\nc{\R}{\mathbb R}
\nc{\M}{\mathbb M}
\nc{\bQ}{\mathbb Q}
\nc{\bP}{\mathbb P}
\nc{\bG}{\mathbf G}
\nc{\cJ}{\mathcal J}
\nc{\cB}{\mathcal B}
\nc{\bV}{\mathbf{V}}
\nc{\cE}{\mathcal E}
\nc{\cP}{\mathcal P}
\nc{\cU}{\mathcal U}
\nc{\cV}{\mathcal V}
\nc{\cW}{\mathcal W}
\nc{\cC}{\mathcal C}
\nc{\cT}{\mathcal T}
\nc{\cX}{\mathcal X}
\nc{\cD}{\mathcal D}
\nc{\cS}{\mathcal S}
\nc{\cA}{\mathcal A}
\nc{\cG}{\mathcal G}
\nc{\cI}{\mathcal I}
\nc{\ba}{\mathbf{a}}
\nc{\bC}{\mathbb C}
\begin{document}
\title{A version of $\kappa$-Miller forcing}
\thanks{
This research, no.~1154 on the second author's list, was partially supported by European Research Council grant 338821.}
\author{Heike Mildenberger and Saharon Shelah}
\address{Heike Mildenberger, Albert-Ludwigs-Universit\"at Freiburg,
Mathematisches Institut, Abteilung f\"ur math. Logik, Ernst--Zermelo--Stra\ss e~1, 
79104 Freiburg im Breisgau, Germany}
\address{Saharon Shelah, Institute of Mathematics, The Hebrew University of Jerusalem,
Edmond Safra Campus Givat Ram, 9190401 Jerusalem, Israel}

\subjclass[2010]{Primary 03E05; Secondary 03E04, 03E15}
\keywords{Forcing with higher perfect trees}
\date{
  March 5, 2019}
\begin{abstract}
  Let $\kappa$ be an  uncountable cardinal such that
  $2^{<\kappa} = \kappa$ or just
  $\cf(\kappa) > \omega$,  $2^{2^{<\kappa}}= 2^\kappa$, and
  $([\kappa]^\kappa, \supseteq)$ collapses $2^\kappa$
  to $\omega$. We show under these assumptions
the 
$\kappa$-Miller forcing with club many splitting nodes collapses
$2^\kappa$ to $\omega$ and adds a $\kappa$-Cohen real.
\end{abstract}

\maketitle

\section{Introduction}
\label{S1}

Many of the tree forcings on the classical Baire space have
various analogues for higher cardinals. Here we are concerned with 
Miller forcing \cite{miller:rational}. For a $\kappa$-version
of Miller forcing, in addition
to superperfectness one usually requires
(see, e.g., \cite[Section~5.2]{BrendleBrooke-TaylorFriedmanMontoya}) limits of
length $< \kappa$ of splitting
nodes be splitting nodes as well and that splitting mean
splitting into a club.
In this paper we investigate a version of $\kappa$-Miller forcing where
this latter requirement is waived. We show: If $\cf(\kappa) > \omega$,
$\cf(\kappa) = \kappa$ or $\cf(\kappa) < 2^{\cf(\kappa)} \leq \kappa$,
$2^{2^{<\kappa}} = 2^\kappa$, and
there is a $\kappa$-mad
family of size $2^\kappa$, then this variant of Miller forcing
is related to the forcing $([\kappa]^\kappa, \supseteq)$ and
collapses $2^\kappa$ to $\omega$.
In particular, if $\omega<\kappa^{<\kappa} = \kappa$, then our four premises
are fulfilled.

Throughout the paper we let
$\kappa$ be an uncountable cardinal.
We write $\trianglelefteq$ for end extension of functions whose domains
are ordinals. If $\dom(t), i $ are ordinals, we write $t \concat \la i\ra$ for 
the concatenation of $t$ with the singleton function 
$\{(0,i)\}$, i.e.,  $t \concat \la i\ra =t \cup \{(\dom(t), i)\}$.
We denote forcing orders in the form $(\bP,\leq_{\bP})$
and let $p \leq_\bP q$ mean that $q$ ist \emph{stronger} than $p$.
We write ${}^{\lambda>} \kappa$ for the set of functions $f \colon \alpha \to \kappa$ for some $\alpha< \lambda$. The domain $\alpha$ of $f$ is also called
the length of $f$. The set of subsets of $\kappa$ of size $\kappa$ is denoted by $[\kappa]^\kappa$.

\begin{definition}\label{1.1}
\begin{myrules}
\item[(1)] $\bQ_\kappa^1$ is the forcing $([\kappa]^\kappa, \supseteq)$.
\item[(2)] $\bQ_\kappa^2$ is the following version of $\kappa$-Miller
forcing: Conditions are trees $T \subseteq {}^{\kappa > } \kappa$ that are
$\kappa$ \emph{superperfect}: for each $s \in T$ there is $s \trianglelefteq  t$
such that $t$ is a $\kappa$-splitting node of $T$ (short $t \in \splitt(T)$).
A node $t \in T$ is called a \emph{$\kappa$-splitting node} if 
\[
\set_p(t) = \{ i < \kappa \such t \concat \la i \ra \in T\}
\]
  has size $\kappa$.
  We furthermore require that the limit of an increasing in the tree order sequence of length less than $\kappa$ of $\kappa$-splitting nodes  is a $\kappa$-splitting node
  if it has length less than $\kappa$.

  For $p,q \in \bQ_\kappa^2$ we write $p \leq_{\bQ^2_\kappa} q$ if $q \subseteq p$.
  So subtrees are stronger conditions.
  
\item[(3)] For 
  $p \in \bQ_\kappa^2$ and $\eta \in p$ we let $\suc_p(\eta) = 
\{\eta' \in {}^{\kappa>} \kappa \such (\exists i \in \kappa) (\eta' = \eta \concat \la i \ra \in p)\}$.
\nothing{
\item [(4)] 
For $p \in \bQ_\kappa^2$ and $\eta \in p$ we let $\set_p(\eta) = 
\{i \in \kappa \such \eta \concat \la i \ra \in p\}$.
\item[(5)] For $p \in \bQ_\kappa^2$ we let $\splitt(p) = \{\eta \in 
  p \such |\suc_p(\eta)| =\kappa\}$ be the set of splitting nodes of $p$.
 }
\item[(4)] Let $\eta \in p \in \bQ^2_\kappa$. We let $p^{\la \eta \ra} = \{\nu\in p \such \nu \trianglelefteq \eta \vee \eta \trianglelefteq \nu\}$.
\item[(5)] For $a, b \subseteq \kappa$ we write $a \subseteq^*_\kappa b$ if
  $|a \setminus b|< \kappa$.
  \nothing{
\item[(add)] Let $\theta \leq \kappa$.
For a subtree $\tau \subseteq {}^\theta \kappa$ we let
$\lim_\theta(\tau) = |\{\eta \in {}^\theta \kappa \such (\forall i < \theta)
\eta \rest i \in \tau \}|$. 
$f(\kappa, \theta, \mu) = \sup\{\lim_\theta(\tau) \such 
\tau \subseteq {}^\theta \kappa \wedge |\tau| \leq \mu\}$.

\item[(5)] If $t \in {}^{\kappa >} \kappa$ we let $\last(t)$ be undefined
  if $\dom(t)$ is a limit ordinal and be $t(\dom(t)-1)$ otherwise.
}
\end{myrules}
\end{definition}

Each of the two forcing orders $\bP$  has a weakest element, denoted by $0_{\bP}$. Namely, $\bQ^1_\kappa$ has as a weakest element $0_{\bQ^1_\kappa} = \kappa$, and $\bQ^2_\kappa$ has as a weakest element the full tree ${}^{\kappa>} \kappa$. 
We write $\bP\Vdash \varphi$ if the weakest condition $0_\bP$ forces $\varphi$.

\section{Results about $\bQ^1_\kappa$}

We will apply the following result for $\chi = 2^\kappa$.

\begin{theorem}  \label{861}
  (\cite[Theorem 0.5]{Sh:861})
  \begin{myrules}
    \item[(1)]
Under the assumption of an antichain of size $\chi$ in $\bQ^1_\kappa$,
$\bQ^1_\kappa$ collapses $\chi$ to $\aleph_0$ if
$\aleph_0 < \cf(\kappa) = \kappa$ or if
$\aleph_0 < \cf(\kappa) < 2^{\cf(\kappa)} \leq  \kappa$.
\item[(2)]
Under the assumption of an antichain of size $\chi$ in $\bQ^1_\kappa$,
$\bQ^1_\kappa$ collapses $\chi$ to $\aleph_1$ in the case of
$\aleph_0 = \cf(\kappa)$.
  \end{myrules}
\end{theorem}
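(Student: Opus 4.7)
My plan is to show that the generic filter for $\bQ^1_\kappa$ codes a surjection from $\omega$ (in case~(1)) or $\omega_1$ (in case~(2)) onto $\chi$, which immediately gives the claimed collapse. First, I would extend $\bar A = \{A_\alpha : \alpha<\chi\}$ to a maximal almost disjoint family (MAD) in $[\kappa]^\kappa$ of size $\geq\chi$; this only enlarges the antichain. A key structural observation is that $\bQ^1_\kappa$ is homogeneous below each condition: for $B\in[\kappa]^\kappa$, fixing a bijection $B\leftrightarrow\kappa$ gives an isomorphism between the restricted forcing $(\cP(B)\cap[\kappa]^\kappa,\supseteq)$ and $\bQ^1_\kappa$. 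Consequently, below every condition there is a MAD of size $\chi$.

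For case~(1) with $\cf(\kappa)=\kappa>\omega$, I would construct in $V$ a sequence of maximal antichains $\mathcal{M}_n=\{B^n_\alpha:\alpha<\chi\}$ for $n<\omega$, and define a $\bQ^1_\kappa$-name $\name f:\omega\to\chi$ by $\name f(n)=\alpha$ iff $B^n_\alpha\in\name G$ (well-defined as each $\mathcal{M}_n$ is a maximal antichain). The crux is to design the $\mathcal{M}_n$ so that for each $\beta<\chi$ and each $B\in\bQ^1_\kappa$ there exist $n<\omega$ and $B'\in[\kappa]^\kappa$ with $B'\subseteq B\cap B^n_\beta$, which forces $\name f$ to be generically onto $\chi$. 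The natural attempt: fix a partition $\kappa=\bigsqcup_{n<\omega}Y_n$ with each $|Y_n|=\kappa$, and via the self-similarity above choose $\mathcal{M}_n$ so that all its elements sit inside $Y_n$. Since $\cf(\kappa)>\omega$, every $B$ has $|B\cap Y_n|=\kappa$ for many $n$, and the MAD $\mathcal{M}_n$ restricted to $Y_n$ produces a compatible cell; careful bookkeeping arranges that for \emph{each} $\beta<\chi$ some $n$ realises $\beta$ as that cell. The singular subcase $\omega<\cf(\kappa)<2^{\cf(\kappa)}\leq\kappa$ of~(1) reduces to the regular case by lifting the argument from a cofinal $\cf(\kappa)$-sequence through $\kappa$; the hypothesis $2^{\cf(\kappa)}\leq\kappa$ is what allows one to transport the size-$\chi$ antichain compatibly with that cofinal structure.

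For case~(2) with $\cf(\kappa)=\omega$, any cofinal $\omega$-sequence in $\kappa$ obstructs the above $\omega$-step construction (the partition $\kappa=\bigsqcup_{n}Y_n$ interacts badly with $\cf(\kappa)=\omega$, since then $\kappa$ \emph{is} a countable union of smaller sets), so one instead uses an $\omega_1$-indexed family $(\mathcal{M}_\xi)_{\xi<\omega_1}$, yielding $\name f:\omega_1\to\chi$ and hence collapse only to $\aleph_1$. The main obstacle throughout is the surjectivity density: executing the diagonalization so that for \emph{every} $\beta<\chi$, \emph{every} condition $B$ admits a refinement $B'\subseteq B\cap B^n_\beta$ for some $n$ (resp.\ $\xi$). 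Arranging the MADs $\mathcal{M}_n$ to interact with the partition of $\kappa$ so that no $B$ can uniformly avoid $B^n_\beta$ for all $n$ --- matching the antichain structure against the cofinality hypothesis --- is the combinatorial heart of Theorem~0.5 of~\cite{Sh:861}.
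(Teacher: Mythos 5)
First, a point of comparison: the paper does not prove Theorem~\ref{861} at all --- it is imported verbatim from \cite{Sh:861} --- so there is no internal proof to measure your attempt against. Judged on its own merits, your proposal has a sensible high-level shape but a genuine gap at exactly the point you yourself label ``the combinatorial heart''. The framework is fine: an $\omega\times\chi$ matrix $\la B^n_\alpha \such n<\omega,\ \alpha<\chi\ra$ whose rows are antichains and whose columns are predense yields a name $\name{f}$ via $\name{f}(n)=\alpha$ iff $B^n_\alpha\in \name{G}$, and column-predensity is exactly the density needed for surjectivity. But producing such a matrix from a single antichain of size $\chi$ \emph{is} the theorem, and the only concrete construction you offer does not work.

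Specifically, for $\cf(\kappa)>\omega$ a countable family $\{C_n \such n<\omega\}$ is predense in $\bQ^1_\kappa$ if and only if $|\kappa\setminus\bigcup_n C_n|<\kappa$ (compatibility means $\kappa$-sized intersection, and a countable union of sets of size $<\kappa$ has size $<\kappa$). If you confine $\mathcal{M}_n$ to the cell $Y_n$ of a partition, then (i) $\mathcal{M}_n$ is not maximal ($\kappa\setminus Y_n$ is incompatible with all of it), contradicting your parenthetical justification that $\name{f}(n)$ is total --- harmless, since a partial surjection suffices --- and (ii), fatally, the column $\{B^n_\beta \such n<\omega\}$ is not predense: taking $C_n=B^n_{\beta'}$ for some $\beta'\neq\beta$ and $B=\bigcup_n C_n$ gives $|B\cap B^n_\beta|\leq|C_n\cap B^n_\beta|<\kappa$ for every $n$, so $B$ forces $\beta\notin\rge(\name{f})$. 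The ``careful bookkeeping'' that is supposed to repair this is never described, and no amount of re-indexing a family of AD sets supported on disjoint cells can make a column predense. Separately, the singular case is precisely where the hypothesis $\cf(\kappa)<2^{\cf(\kappa)}\leq\kappa$ does real work, and ``lifting the argument through a cofinal sequence'' is a gesture, not an argument; the same applies to the $\omega_1$-indexed variant in case (2). As it stands the proposal is a plausible plan whose essential step is missing.
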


\begin{definition} A family $\cA \subseteq [\kappa]^\kappa$ is called a
  $\kappa$-almost disjoint family if for $A \neq B \in \cA$, $|A \cap B|<\kappa$. A $\kappa$-almost disjoint family of size at least $\kappa$ that is maximal
  is called a $\kappa$-mad family.
\end{definition}

\begin{observation}\label{obs}
If $2^{<\kappa} = \kappa$, there is a $\kappa$-mad family
$\cA\subseteq[\kappa]^\kappa$ of size $2^\kappa$.
\end{observation}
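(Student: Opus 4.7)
The plan is to construct a $\kappa$-almost disjoint family of size $2^\kappa$ by the standard tree-branches trick on the binary tree $2^{<\kappa}$, and then extend it to a maximal one by a Zorn's Lemma argument. The hypothesis $2^{<\kappa} = \kappa$ is used exactly to ensure that $|2^{<\kappa}| = \kappa$, so that we may view the family as living inside $[\kappa]^\kappa$.

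First, I would fix a bijection $\pi \colon 2^{<\kappa} \to \kappa$, which exists because $|2^{<\kappa}| = 2^{<\kappa} = \kappa$. For each branch $x \in {}^\kappa 2$, set
\[
B_x = \{x \restriction \alpha \such \alpha < \kappa\} \subseteq 2^{<\kappa},
\]
so that $|B_x| = \kappa$. If $x \neq y$ are two distinct branches, let $\alpha_0 < \kappa$ be the least ordinal where they differ. Then $B_x \cap B_y = \{x \restriction \beta \such \beta \leq \alpha_0\}$, which has cardinality $|\alpha_0| + 1 < \kappa$ because $\kappa$ is a cardinal and $\alpha_0 < \kappa$. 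Thus $\{B_x \such x \in {}^\kappa 2\}$ is a $\kappa$-almost disjoint family of size $2^\kappa$ inside $[2^{<\kappa}]^\kappa$. Transporting along $\pi$ gives a $\kappa$-almost disjoint family $\cA_0 \subseteq [\kappa]^\kappa$ of size $2^\kappa$.

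Next, I would apply Zorn's Lemma to the poset of all $\kappa$-almost disjoint subfamilies of $[\kappa]^\kappa$ that extend $\cA_0$, ordered by inclusion. The union of any chain remains $\kappa$-almost disjoint since $\kappa$-almost disjointness is a pairwise condition, so a maximal element $\cA$ exists. By maximality $\cA$ is a $\kappa$-mad family; moreover $|\cA| \geq |\cA_0| = 2^\kappa$, while $|\cA| \leq |[\kappa]^\kappa| = 2^\kappa$, so $|\cA| = 2^\kappa$ as required.

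There is no real obstacle here: the argument is the exact transcription of the classical $\omega$-case, and the only thing to verify is that every ordinal below $\kappa$ has cardinality below $\kappa$, which is immediate from $\kappa$ being a cardinal (in particular the argument goes through uniformly for regular and singular $\kappa$, so long as $2^{<\kappa} = \kappa$ holds).
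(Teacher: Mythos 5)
Your proposal is correct and follows essentially the same route as the paper: inject the tree $2^{<\kappa}$ into $\kappa$ (using $2^{<\kappa}=\kappa$), take the images of the $2^\kappa$ branches as a $\kappa$-almost disjoint family, and extend to a maximal one. The paper's proof is just a terser version of exactly this argument.
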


\begin{proof} We let $f \colon {}^{\kappa>} 2 \to \kappa$ be an injection.
  We assign to each branch  $b$ of ${}^{\kappa>}2$ a set 
$a_b=\{f(s) \such s \in b\}$. Then we  complete the resulting family 
$\{a_b \such b \mbox{ branch of }{}^{\kappa>} 2\}$ to a maximal $\kappa$-almost disjoint family.
\end{proof}

\begin{observation}\label{obs2}
  If $\bQ^1_\kappa$ collapses $2^\kappa$ to $\omega$, then there is
a $\kappa$-mad family
$\cA$ of size $2^\kappa$.
\end{observation}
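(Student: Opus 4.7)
The plan is first to identify antichains in $\bQ^1_\kappa$ with $\kappa$-almost disjoint families, and then to extract an antichain of size $2^\kappa$ from a name for a collapsing surjection. For $A,B \in [\kappa]^\kappa$, a common extension in $\bQ^1_\kappa$ is a $C \in [\kappa]^\kappa$ with $C \subseteq A \cap B$, and such $C$ exists iff $|A \cap B| = \kappa$. Hence incompatibility of $A$ and $B$ in $\bQ^1_\kappa$ coincides with $|A \cap B| < \kappa$, so antichains in $\bQ^1_\kappa$ are exactly $\kappa$-almost disjoint subfamilies of $[\kappa]^\kappa$, and any maximal antichain of size $\geq \kappa$ is a $\kappa$-mad family.

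To obtain a large antichain, let $\name{f}$ be a $\bQ^1_\kappa$-name for a surjection from $\omega$ onto $(2^\kappa)^V$, which exists by hypothesis. For each $\alpha < 2^\kappa$ the set
\[
D_\alpha = \{p \in \bQ^1_\kappa \such (\exists n < \omega)\; p \Vdash \name{f}(n) = \check\alpha\}
\]
is dense in $\bQ^1_\kappa$, so we pick $p_\alpha \in D_\alpha$ together with a witness $n_\alpha < \omega$. Since $2^\kappa > \omega$, the pigeonhole principle yields some $n^* < \omega$ with $S := \{\alpha < 2^\kappa \such n_\alpha = n^*\}$ of size $2^\kappa$. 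For distinct $\alpha, \beta \in S$, any common extension of $p_\alpha$ and $p_\beta$ would force $\name{f}(n^*)$ to equal both $\check\alpha$ and $\check\beta$, which is absurd; thus $\{p_\alpha \such \alpha \in S\}$ is an antichain of size $2^\kappa$ in $\bQ^1_\kappa$.

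Finally, extend $\{p_\alpha \such \alpha \in S\}$ to a maximal antichain $\cA$ in $\bQ^1_\kappa$. By the first paragraph, $\cA$ is $\kappa$-almost disjoint, and it is $\kappa$-mad by maximality. The bounds $2^\kappa \leq |\cA| \leq |[\kappa]^\kappa| = 2^\kappa$ give $|\cA|=2^\kappa$, finishing the proof. No step presents a genuine difficulty; the only point requiring care is the equivalence, at the start, between incompatibility of $A,B$ in $\bQ^1_\kappa$ and the condition $|A \cap B| < \kappa$.
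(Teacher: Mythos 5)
Your proof is correct and follows the same route as the paper: produce an antichain of size $2^\kappa$ in $\bQ^1_\kappa$, observe that antichains are exactly $\kappa$-almost disjoint families, and extend to a maximal one. The only difference is cosmetic: where the paper simply asserts that a forcing collapsing $2^\kappa$ to $\omega$ cannot have the $2^\kappa$-c.c., you unwind that standard fact into the explicit pigeonhole argument on the name for the collapsing surjection (using $\cf(2^\kappa)>\omega$), which is a perfectly good way to fill in that step.
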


\begin{proof} $\bQ^1_\kappa$ cannot have the $2^\kappa$-c.c. Hence there is an antichain of size $2^\kappa$. This is a $\kappa$-ad family, and we extend it to a $\kappa$-mad family. \end{proof}

For further use, we indicate the hypothesis for each technical step.

\begin{lemma}\label{inserted}
  Suppose that $\bQ_\kappa^1$ collapses $2^\kappa$ to $\omega$. Then
  there is a $\bQ^1_\kappa$-name $\name{\tau} \colon \aleph_0 \to 2^\kappa$
  for a surjection, and there is a labelled tree ${\mathcal T}=
  \la (a_\eta, n_\eta, \varrho_\eta) \such \eta \in {}^{\omega>}(2^\kappa)\ra$ with the following properties
  \begin{myrules}
   \item[(a)] $a_{\la \ra} = \kappa$ and for any $\eta \in {}^{\omega>}(2^\kappa)$, $a_\eta \in [\kappa]^\kappa$.
   \item[(b)] $\eta_1 \triangleleft \eta_2$ implies $a_{\eta_1} \supseteq a_{\eta_2}$.
     \item[(c)] $n_\eta \in [\lg(\eta)+1, \omega)$.
     \item[(d)] If $a \in [\kappa]^\kappa$ then there is some $\eta \in {}^{\omega >} (2^\kappa)$ such that $a \supseteq a_\eta$.
             \item[(e)]
          If
          $\eta\concat \la \beta \ra \in T$ then $a_{\eta \concat \la \beta\ra}$
          forces $\name{\tau} \rest
          n_\eta = \varrho_{\eta \concat \la \beta \ra}$ for some
          $\varrho_{\eta \concat \la \beta \ra} \in {}^{n_\eta} (2^\kappa)$,
          such that the $\varrho_{\eta \concat \la \beta \ra}$, $\beta \in 2^\kappa$, are pairwise different.  Hence for any $\eta \in {}^{\omega>}(2^\kappa)$, the family $\{a_{\eta \concat\la \alpha \ra} \such \alpha <  2^\kappa \}$ is a $\kappa$-ad family in $[a_\eta]^\kappa$.
      \nothing{\item[(f)]
        \[
          {\rm Pos}(a_\eta, n_\eta)= \{\varrho \in {}^{n_{\eta}} (2^\kappa) \such a_\eta \not\Vdash_{\bQ^1_\kappa} \name{\tau} \rest n_\eta \neq \varrho\}
        \] has cardinality $2^\kappa$.}
\end{myrules}
\end{lemma}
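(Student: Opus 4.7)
The plan is to introduce a ``possibility set'' for the generic surjection and to build the tree labels so that the children of each node enumerate $2^\kappa$ many distinct possibilities. Fix a $\bQ^1_\kappa$-name $\name{\tau}$ for a surjection $\omega\to 2^\kappa$ (which exists by hypothesis), and for $a \in [\kappa]^\kappa$ and $n<\omega$ put
\[
\mathrm{Pos}(a,n) = \{\varrho \in {}^n(2^\kappa) \such a \not\Vdash_{\bQ^1_\kappa} \name{\tau}\rest n \neq \varrho\}.
\]
The preparatory fact to prove is that $\sup_{n<\omega}|\mathrm{Pos}(a,n)| = 2^\kappa$ for every $a \in [\kappa]^\kappa$. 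I would argue by contradiction: if each $|\mathrm{Pos}(a,n)|<2^\kappa$, then because $\cf(2^\kappa)>\omega$ (K\"onig's inequality, since $\kappa$ is uncountable) the $\omega$-supremum stays below $2^\kappa$, so $\bigcup_n \mathrm{Pos}(a,n)$ has cardinality $<2^\kappa$; but then the range of $\name{\tau}$ below $a$ is contained in a set of that cardinality (the union of the finite ranges of the $\varrho$'s), contradicting that $a$ forces $\name{\tau}$ to be onto $2^\kappa$.

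Next I would define $(a_\eta, n_\eta, \varrho_\eta)$ by recursion on $\lg(\eta)$, starting with $a_{\la\ra} = \kappa$. Given $a_\eta$, choose $n_\eta$ to be the least $n \geq \lg(\eta)+1$ with $|\mathrm{Pos}(a_\eta,n)| = 2^\kappa$, enumerate $\mathrm{Pos}(a_\eta,n_\eta) = \{\varrho_\beta \such \beta<2^\kappa\}$, set $\varrho_{\eta\concat\la\beta\ra} = \varrho_\beta$, and pick any $a_{\eta\concat\la\beta\ra}\subseteq a_\eta$ of size $\kappa$ forcing $\name{\tau}\rest n_\eta = \varrho_\beta$. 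Clauses (a), (b), (c), and the distinctness of the $\varrho_{\eta\concat\la\beta\ra}$'s in (e) follow at once; the $\kappa$-almost disjointness in (e) is automatic because a $\kappa$-sized subset of $a_{\eta\concat\la\beta\ra}\cap a_{\eta\concat\la\beta'\ra}$ (for $\beta\neq\beta'$) would force both $\varrho_\beta$ and $\varrho_{\beta'}$ as values of $\name{\tau}\rest n_\eta$, which is impossible.

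Clause (d), saying that $\{a_\eta \such \eta \in {}^{\omega>}(2^\kappa)\}$ is dense in $\bQ^1_\kappa$, requires additional bookkeeping. Enumerate $[\kappa]^\kappa = \{b_\gamma \such \gamma < 2^\kappa\}$ and schedule for each $\gamma$ the task of installing some $a_\eta \subseteq b_\gamma$. The mechanism, at any node $\eta$ with $b_\gamma \cap a_\eta \in [\kappa]^\kappa$, is to choose $n_\eta$ large enough that additionally $|\mathrm{Pos}(b_\gamma \cap a_\eta, n_\eta)| = 2^\kappa$ (applying the key fact to $b_\gamma \cap a_\eta$), reserve one child slot $\beta^*$, and put $a_{\eta\concat\la\beta^*\ra} \subseteq b_\gamma \cap a_\eta$ with a fresh $\varrho_{\eta\concat\la\beta^*\ra} \in \mathrm{Pos}(b_\gamma \cap a_\eta, n_\eta)$.

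The hard part will be to ensure every $b_\gamma$ is handled, i.e.\ that some scheduled node $\eta$ actually has $b_\gamma \cap a_\eta \in [\kappa]^\kappa$. I would arrange this by strengthening the recursion so that the children family $\{a_{\eta\concat\la\beta\ra} \such \beta<2^\kappa\}$ is a $\kappa$-maximal almost disjoint family in $[a_\eta]^\kappa$ (achievable because for any $c \subseteq a_\eta$ of size $\kappa$ one finds $c' \subseteq c$ of size $\kappa$ forcing some value in $\mathrm{Pos}(a_\eta, n_\eta)$, which can then be absorbed into the maximal family). Then any $b_\gamma$ admits a descending branch $\eta_0 \triangleleft \eta_1 \triangleleft \cdots$ with $|b_\gamma \cap a_{\eta_n}| = \kappa$ for all $n$, and scheduling $b_\gamma$'s task at $2^\kappa$ many nodes throughout the tree (feasible since there are $2^\kappa$ many tasks and $2^\kappa$ many node-slots) guarantees some $\eta_n$ on the branch where the task executes successfully.
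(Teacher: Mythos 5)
Your preparatory fact (some $n$ with $|{\rm Pos}(a,n)|=2^\kappa$, via $\cf(2^\kappa)>\omega$) and your treatment of clauses (a), (b), (c) and (e) are fine and match the mechanism the paper uses. The gap is in clause (d), which is the heart of the lemma. First, the scheduling argument is a non sequitur: for a given $b_\gamma$, the nodes $\eta$ at which the task is even executable (those with $|b_\gamma\cap a_\eta|=\kappa$) may form exactly the countable branch you extract from madness, and assigning the task to $2^\kappa$ many node-slots chosen without knowledge of that branch gives no reason why any slot lands on it. Second, and more fundamentally, even at an executable node $\eta$ the task can fail: to install a child $a_{\eta\concat\la\beta^*\ra}\subseteq b_\gamma\cap a_\eta$ compatibly with (e) you need some $\varrho\in{\rm Pos}(b_\gamma\cap a_\eta,\,n_\eta)$ that is not already the value of another child of $\eta$. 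Here $n_\eta$ is a single natural number fixed at $\eta$, ${\rm Pos}(b_\gamma\cap a_\eta,n_\eta)$ can be a singleton, and because your recursion assigns all $2^\kappa$ children of $\eta$ (hence $2^\kappa$ pairwise distinct values) at the moment $\eta$ is processed, every element of that small set may already be taken; merging the new piece into the child carrying the same value preserves madness but destroys the conclusion $a_{\eta\concat\la\beta^*\ra}\subseteq b_\gamma$. This can happen at every node of the branch, so madness of the children families does not rescue (d).

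The paper's proof is organized precisely to avoid this. It builds $\cT$ as the union of a $2^\kappa$-chain of partial labelled trees of size $<2^\kappa$ (the approximations $AP_\alpha$), so at every stage the set of values $\varrho$ used anywhere in the tree has size $<2^\kappa$. To handle $a\in[\kappa]^\kappa$ it does not fix the target node in advance: it first chooses $n$ with $|\cU_{a,n}|=2^\kappa$, picks $\varrho_a\in\cU_{a,n}$ incompatible with all $<2^\kappa$ values already present, and $b_a\in[a]^\kappa$ forcing $\varrho_a\triangleleft\name{\tau}$; only then does it locate the node, namely the $\triangleleft$-maximal $\eta_a$ with $b_a\subseteq^*_\kappa a_{\eta_a}$, where the freshness of $\varrho_a$ guarantees $b_a$ is $\kappa$-almost disjoint from all existing children and can be attached as a new child. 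Your level-by-level recursion, which saturates each node with all $2^\kappa$ children at once, forecloses exactly this freshness argument. To repair your proof you would have to abandon the level-by-level scheme and keep each node's set of assigned children of size $<2^\kappa$ throughout a length-$2^\kappa$ recursion, at which point you have essentially reconstructed the paper's argument.
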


\proof
Let $\name{\tau}$ be a $\bQ^1_\kappa$-name such that
$\bQ^1_\kappa \Vdash \name{\tau} \colon \aleph_0 \to 2^\kappa$ is onto.
For $\alpha < 2^\kappa$ let $AP_\alpha$ be the set of objects $\bar{m}$ satisfying
\begin{myrules} 
\item[$(\ast)_1$]
  \begin{myrules} 
  \item[(1.1)] $\bar{m} = (T,\bar{a},\bar{n},\bar{\varrho}) = (T_{\bar{m}}, \bar{a}_{\bar{m}}, \bar{n}_{\bar{m}}, \bar{\varrho}_{\bar{m}})$.
  \item[(1.2)] $T$ is a subtree of $({}^{\omega>} (2^\kappa), \triangleleft)$ of cardinality $\leq |\alpha| + \kappa$ and $\la \ra \in T$.
  \item[(1.3)] 
    $\bar{a} = \la a_\eta \such \eta \in T\ra$ fulfils
    $\eta \triangleleft \nu \rightarrow a_\nu \subseteq a_\eta$ and
    $ a_{\la \ra} = \kappa$ and $a_\eta \in[\kappa]^{\kappa}$.
    \item[(1.4)]
    $\bar{n} = \la n_\eta \such \eta \in T \ra$ fulfils  $\dom(\varrho_{\eta \concat \la \beta \ra}) = n_\eta > \lg(\eta)$ for any $\eta \concat \la \beta \ra \in T$.
      
\item[(1.5)]  If
          $\eta\concat \la \beta \ra \in T$, then $a_{\eta \concat \la \beta\ra}$
          forces a value to $\name{\tau} \rest
          n_\eta$ called $\varrho_{\eta \concat \la \beta \ra}$ and for $\beta \neq \gamma$
          we have $\varrho_{\eta \concat \la \beta \ra} \neq \varrho_{\eta \concat
            \la\gamma\ra}$.
          Hence for any $\eta \concat\la \beta\ra$, $\eta \concat \la \gamma\ra \in T_{\bar{m}}$, 
    $\beta \neq \gamma$ implies $a_{\eta \concat\la \beta \ra } \cap a_{\eta \concat \la \gamma \ra} \in [\kappa]^{<\kappa}$.
     \item[(1.6)] For $\eta \in T_{\bar{m}}$, we let
        \[
          {\rm Pos}(a_\eta, n_\eta)= \{\varrho \in {}^{n_{\eta}} (2^\kappa) \such a_\eta \not\Vdash_{\bQ^1_\kappa} \name{\tau} \rest n_\eta \neq \varrho\},
          \]
          and require that the latter has cardinality $2^\kappa$.
\end{myrules}
  In the next items we state some properties of $AP_\alpha$ that are derived
  from $(\ast)_1$.
\item[$(\ast)_2$]
    $AP = \bigcup \{AP_\alpha \such \alpha< 2^\kappa\}$
  is ordered naturally by $\leq_{AP}$, which means end extension.
\item[$(\ast)_3$]
  \begin{myrules}
    \item[(a)]
    $AP_\alpha$ is not empty and increasing in $\alpha$.
  \item[(b)] For infinite $\alpha $, 
    $AP_\alpha$ is closed under unions of increasing sequences of length $< |\alpha|^+$.
  \end{myrules}

\item[$(\ast)_4$] Let $\gamma < 2^\kappa$.
  If $\bar{m} \in AP_\gamma$ and $\eta \in T_{\bar{m}}$ and $\eta \concat \la \alpha \ra \not\in T_{\bar{m}}$ then there is $\bar{m}' \in AP_\gamma$ such that $\bar{m} \leq_{AP} \bar{m}'$ and $T_{\bar{m}'} = T_{\bar{m}} \cup \{\eta \concat \la \alpha \ra\}$.

Proof: For $\eta \in T_{\bar{m}}$,
  \[\cU={\rm Pos}(a_\eta, n_\eta)= \{\varrho \in {}^{n_{\eta}} (2^\kappa) \such a_\eta \not\Vdash_{\bQ^1_\kappa} \name{\tau} \rest n_\eta \neq \varrho\} \mbox{ has size } 2^\kappa,\]
  whereas
\[\Lambda_\eta = \{ \varrho_{\eta \concat \la \beta \ra} \rest n_\eta \such \beta \in 2^\kappa
\wedge \eta \concat \la \beta \ra \in T_{\bar{m}}\}\]
is of size $\leq |T_{\bar{m}}| \leq |\gamma | + \kappa$.
 Hence we can choose $\varrho_\ast \in  \cU \setminus \Lambda_\eta$
 and $b_\ast \in [a_\eta]^\kappa$ such that $b_\ast \Vdash_{\bQ^1_\kappa} \varrho_\ast = \name{\tau} \rest n_\eta$. We let $\varrho_{\eta \concat \la \alpha \ra} = \varrho_\ast$.  Since $b_\ast$ forces a value of $\tau \rest n_\eta$ that is incompatible with the one forced by $a_{\eta \concat \la \beta \ra}$ for any $\eta \concat \la \beta \ra \in T_{\bar{m}}$, the set $b_\ast$ is $\kappa$-almost disjoint from $ a_{\eta \concat \la \beta \ra}$
  for any $\eta \concat \la \beta \ra \in T_{\bar{m}}$.
 We take $b_\ast = a_{\bar{m}', \eta \concat \la \alpha \ra} \subseteq a_{\bar{m},\eta}$.

 Since $\cf(2^\kappa ) > \aleph_0$ and since
 \[
 |\{\rge(\varrho) \such \varrho \in  {}^{\omega >} (2^\kappa) \wedge b_\ast \not\Vdash_{\bQ^1_\kappa} \name{\tau} \rest n \neq \varrho\}| = 2^\kappa,
 \]
        there is  an $n$ such that
\[
{\rm Pos}(b_\ast, n)= \{\varrho \in {}^{n} (2^\kappa) \such b_\ast \not\Vdash_{\bQ^1_\kappa} \name{\tau} \rest n \neq \varrho\}
\]
has cardinality $2^\kappa$.
          We take the minimal one and let it be $n_{\eta \concat \la \alpha \ra}$.
\item[$(\ast)_5$]
  If $\bar{m} \in AP_\alpha$ and $a \in [\kappa]^\kappa$ then there is some $\bar{m}' \geq \bar{m}$, such that there is $\eta \in T_{\bar{m}'}$
  with $a_{\bar{m}',\eta} \subseteq a$.

  Let
  \[\cU_a = \{ \varrho \in {}^{\omega>} (2^\kappa) \such
  a \not\Vdash_{\bQ^1_\kappa} \varrho \! \not\triangleleft \, \name{\tau}\},
  \]
  i.e.
  \[\cU_a = \{ \varrho \in {}^{\omega>} (2^\kappa) \such
  (\exists b \geq_{\bQ^1_\kappa}  a)(b \Vdash_{\bQ^1_\kappa} \varrho \triangleleft \name{\tau})\}.
  \]
  This set has cardinality $2^\kappa$  because
  $\bQ^1_\kappa \Vdash \name{\tau}  \colon \omega \to 2^\kappa$ is onto.
  We take $n$ minimal such that
  \[\cU_{a,n} = \{ \varrho \in {}^{n} (2^\kappa) \such
  (\exists b \geq_{\bQ^1_\kappa}  a)(b \Vdash_{\bQ^1_\kappa} \varrho \triangleleft \name{\tau})\}
  \]
  has size $2^\kappa$. We let
  \[\set_n^+(\bar{m}) = \{ \varrho_{\eta} \such \eta \in T_{\bar{m}}, \lg(\varrho_\eta) \geq n\}.
  \]
Clearly $|\set_n^+(\bar{m})| \leq
  |T_{\bar{m}}| \leq |\gamma | + \kappa$.
   Thus we can take $\varrho_a \in \cU_{a,n}$ that is incompatible
    with every element of $\set^+_n(\bar{m})$. We take some $b_a \in [a]^\kappa$ such that
    $b_a \Vdash_{\bQ^1_\kappa} \varrho_a \trianglelefteq \name{\tau}$.
    The set
    \[\Lambda_a = \{ \eta \in T_{\bar{m}} \such
      b_a \subseteq^*_\kappa a_\eta\}\]
      is $\triangleleft$-linearly ordered by $(\ast)_1$ clauses 1.3 and 1.5 and $\la \ra \in \Lambda_a$. Since $b_a$ does not pin down $\name{\tau}$,
      $\Lambda_a$  has a $\triangleleft$-maximal member $\eta_a$.
      Now we take $\alpha_\ast = \min\{\beta \such \eta_a \concat \la \beta \ra \not\in T_{\bar{m}}\}$.
      For any $\eta_a \concat \la \beta \ra \in T_{\bar{m}}$ we have
      $\varrho_{\eta_a \concat \la \beta \ra} $ and $\varrho_a$ are incompatible,
      and hence $a_{\eta_a \concat \la \beta \ra} \cap b_a \in [\kappa]^{<\kappa}$.
Now we choose $b_a^1 \in [b_a]^\kappa$ and $\varrho^*_a$ such that
     $ b_a^1 \Vdash_{\bQ^1_\kappa} \varrho^\ast_a \triangleleft \name{\tau}$ and $\lg(\varrho^\ast_a) \geq n_{\bar{m},\eta_a}> \lg(\eta_a)$.
 
We let \begin{eqnarray*}
  T_{\bar{m}'} &=& T_{\bar{m}} \cup \{\eta_a \concat \la \alpha_\ast \ra\},\\
  a_{\eta_a \concat \la \alpha_\ast \ra} &=& b^1_a,
  \end{eqnarray*}
We let $n_{\eta_a \concat \la \alpha_\ast\ra}$ be the minimal $n$ such that $|{\rm Pos}(b_a^1,n)| \geq 2^\kappa$.
So $(\ast)_5$ holds.
\end{myrules}

Now we are ready to construct $\cT$ as in the statement of the lemma.
We do this by recursion on $\alpha \leq 2^{\kappa}$.
First we enumerate $[\kappa]^\kappa$ as  $\la c_\alpha \such \alpha< 2^\kappa\ra$,
and we enumerate
${}^{\omega >} (2^\kappa)$ as $\la\eta_\alpha\such \alpha < 2^\kappa\ra$ such that
$\eta_\alpha \triangleleft \eta_\beta$ implies $\alpha < \beta$. We choose
an increasing sequence
$\bar{m}_\alpha$ by induction on $\alpha<2^\kappa$. We start with the tree
$\{\la \ra\}$, $a_{\la \ra}= \kappa$, $\varrho_{\la \ra} = \emptyset$, $n_{\la \ra}$
be minimal such that $|{\rm Pos}(\kappa,n)| = 2^\kappa$. In the odd successor steps we take $\bar{m}_{2\alpha +1} \geq_{AP} \bar{m}_\alpha$ so that
$a_\eta \subseteq c_\alpha$ for some $\eta \in T_{2\alpha+1}$. This is done according to $(\ast)_5$.
In the even successor steps we take $\bar{m}_{2\alpha+2} \geq_{AP} \bar{m}_{2\alpha+1}$ such that $\eta_\alpha \in T_{2\alpha+2}$. Since
all initial segments of $\eta_\alpha$ appeared among the $\eta_\beta$, $\beta < \alpha$, $\bar{m}_{2\alpha+2}$ is found 
according to $(\ast)_4$.
In the limit steps we take unions.
Then ${\mathcal T}$ that is given by the the last
three components of $\bar{m}_{2^\kappa}$ has properties (a) to (e).
\proofend

Since $\tau= \name{\tau}[G]$ is not in $\bV$, for any $\cT$ as in Lemma~\ref{inserted}
no sequence of first components of a branch, i.e., no $\la a_{f\rest n} \such n \in \omega\ra$, $f \in {}^\omega(2^\kappa) \cap {\bV}$, has a $\subseteq^*_\kappa$-lower bound.

\section{Transfer to $\bQ^2_\kappa$}
\label{S3}

In this section we use the tree $\cT$ from Lemma~\ref{inserted}
for finding $\bQ^2_\kappa$-names.

\begin{definition} Let $\mu, \lambda$ be cardinals. For $\nu, \nu' \in {}^{\lambda >}\mu$ we write
  $\nu \perp \nu'$ if
  $\nu \not\trianglelefteq \nu'$ and  $\nu' \not\trianglelefteq \nu$.
\end{definition}

Typical pairs $(\lambda, \mu)$ are $(\omega, 2^\kappa)$ and $(\kappa,\kappa)$.

An important tool for the analysis of $\bQ^2_\kappa$ is
the following particular kind of fusion sequence
$\la p_\alpha \such \alpha < \kappa^{<\kappa} \ra$
in $\bQ^2_\kappa$. Since we do not suppose $\kappa^{<\kappa} = \kappa$,
a fusion sequence can be longer than $\kappa$. An important property
is that for each
$\nu \in {}^{\kappa>}\kappa$ there is at most one $\alpha< \kappa^{<\kappa}$
such that $\set_{p_\alpha}(\nu) \supsetneq \set_{p_{\alpha+1}}(\nu)$.

\begin{lemma} \label{fusionstark}
  Let $\la \nu_\alpha \such \alpha < \kappa^{<\kappa} \ra$ be an
  injective enumeration of $\kappa^{<\kappa}$ such that
  \begin{equation} \label{gut}
    \nu_\alpha \triangleleft \nu_\beta \rightarrow \alpha < \beta.
    \end{equation}
    Let $\la p_\alpha, \nu_\alpha, c_\alpha \such \alpha < \kappa^{<\kappa} \ra$ be a sequence
    such that for any $\alpha \leq \lambda$ the following holds:
    \begin{myrules}
      \item[(a)] $p _0 \in \bQ^2_\kappa$.
    \item[(b1)] If $\alpha = \beta +1 < \kappa^{<\kappa}$ 
      and $\nu_\beta \in sp(p_\beta)$, then 
      \begin{align*}
\begin{split}
   c_\beta & \in [\suc_{p_\beta}(\nu_\beta)]^\kappa \mbox{ and }\\
  p_{\alpha}= p_\beta(\nu_\beta, c_\beta)  &:= \bigcup\{p_\beta^{\la \nu_\beta \concat \la i  \ra \ra} \such i \in c_\beta \}
      \cup \bigcup\{p_\beta^{\la \eta \ra} \such \eta \not\trianglelefteq \nu_\beta
      \wedge \nu_\beta \not\trianglelefteq \eta\}
\end{split}
      \end{align*}
\item[(b2)]
If $\alpha = \beta +1 < \kappa^{<\kappa}$ 
and $\nu_\beta \not\in \splitt(p_\beta)$ then $p_\alpha= p_\beta$.
  \item[(c)] $p_\alpha = \bigcap \{p_\beta \such \beta<\alpha \}$ for limit $\alpha \leq \kappa^{<\kappa}$.
    \end{myrules}
    Then for any $\lambda \leq \kappa^{<\kappa}$, $p_\lambda \in \bQ^2_\kappa$ and $\forall \beta < \lambda$, $p_\beta \leq_{\bQ^2_\kappa} p_\lambda$.
  \end{lemma}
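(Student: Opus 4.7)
The plan is to prove the statement by transfinite induction on $\lambda\leq\kappa^{<\kappa}$, establishing simultaneously three things: (i) $p_\lambda\in\bQ^2_\kappa$; (ii) $p_\lambda\subseteq p_\beta$ for every $\beta<\lambda$; and, most importantly, an auxiliary claim $(\ast)$ which will make everything else easy:
\begin{equation*}
  \splitt(p_\lambda)=\splitt(p_0)\cap p_\lambda,\quad\text{and for }\nu\in p_\lambda,\ \set_{p_\lambda}(\nu)=\begin{cases}c_{\alpha(\nu)}&\text{if (b1) was applied at step }\alpha(\nu)+1<\lambda\\ \set_{p_0}(\nu)&\text{otherwise,}\end{cases}
\end{equation*}
where $\alpha(\nu)$ is the unique index with $\nu_{\alpha(\nu)}=\nu$. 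The driving observation behind $(\ast)$ is that a single trimming step $\beta\mapsto\beta+1$ can change $\set_p(\mu)$ only when $\mu=\nu_\beta$: if $\mu$ is incomparable with $\nu_\beta$, or $\mu\trianglelefteq\nu_\beta$, or $\mu$ is a strict extension of $\nu_\beta$, then one checks directly from the definition of $p_\beta(\nu_\beta,c_\beta)$ that $\suc$ at $\mu$ is unaffected. Since the enumeration is injective, each node $\nu$'s successor set is ``touched'' at most once across the whole process, and exactly at its own index $\alpha(\nu)$.

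The base case is trivial. For successor $\lambda=\alpha+1$ one distinguishes (b1) and (b2): case (b2) is vacuous, and in case (b1) I would verify directly that $p_\alpha(\nu_\alpha,c_\alpha)$ is a subtree of $p_\alpha$ and check superperfectness by cases on the position of $s\in p_{\alpha+1}$ relative to $\nu_\alpha$: if $s\trianglelefteq\nu_\alpha$ use $\nu_\alpha$ itself (which still $\kappa$-splits since $|c_\alpha|=\kappa$), while if $s$ strictly extends $\nu_\alpha$ or is incomparable with $\nu_\alpha$ use superperfectness of $p_\alpha$ to get a witness $t$, noting that $t\ne\nu_\alpha$ so its $\set$ is unchanged. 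The splitting-limit clause propagates by the same case analysis.

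For limit $\lambda$, $p_\lambda=\bigcap_{\beta<\lambda}p_\beta$ is a subtree and (ii) is immediate. The auxiliary $(\ast)$ at $\lambda$ follows from the observation above and the induction hypothesis: for each $\nu\in p_\lambda$ the sequence $\langle\set_{p_\beta}(\nu):\beta<\lambda\rangle$ is eventually constant, equal to $c_{\alpha(\nu)}$ if (b1) ever applied at $\nu$ and to $\set_{p_0}(\nu)$ otherwise. Since both $c_{\alpha(\nu)}$ and $\set_{p_0}(\nu)$ have size $\kappa$ iff $\nu\in\splitt(p_0)$, we get $\splitt(p_\lambda)=\splitt(p_0)\cap p_\lambda$.

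Superperfectness of $p_\lambda$ is the delicate point. Given $s\in p_\lambda$, if $s\in\splitt(p_0)$ take $t=s$ and use $(\ast)$. Otherwise, by superperfectness of $p_0$ fix some $t^{\prime}\trianglerighteq s$ in $\splitt(p_0)$, and let $t\trianglelefteq t^{\prime}$ be the initial segment of shortest length with $s\trianglelefteq t$ and $t\in\splitt(p_0)$. By minimality every $\mu$ with $s\triangleleft\mu\triangleleft t$ is non-splitting in $p_0$, so case (b2) applied at the step processing $\mu$ and no trimming occurred there; trimmings at proper initial segments of $s$ preserve the entire path through $s$ (because $s\in p_\lambda$), hence also preserve $t$; and trimming at $t$ itself keeps $t$ (it only restricts $t$'s successors). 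Thus $t\in p_\lambda$, and by $(\ast)$, $t\in\splitt(p_\lambda)$. The splitting-limit property transfers by essentially the same mechanism: for an increasing $\langle\eta_i:i<\delta\rangle$ of splitting nodes of $p_\lambda$ with limit $\eta$, each $\eta_i\in\splitt(p_0)$ by $(\ast)$, so $\eta\in\splitt(p_0)$ by the corresponding property of $p_0$; and $\eta\in p_\lambda$ because for any trimming step with $\nu_\beta\triangleleft\eta$ one can pick $i$ with $\lg(\eta_i)>\lg(\nu_\beta)$, so that $\eta$ and $\eta_i$ share the path-entry at $\nu_\beta$, which lies in $c_\beta$ since $\eta_i\in p_\lambda$. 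The main obstacle is the superperfectness step: a naive choice of a splitting node above $s$ may be destroyed by trimmings at intermediate splitting ancestors, and the fix is the minimality-of-length selection, which forces the intermediate nodes to be non-splitting and thus invariant under the process.
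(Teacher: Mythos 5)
Your proof is correct and follows essentially the same route as the paper's: the paper likewise rests on the observation that each node's successor set is altered at most once across the whole construction (so that at a limit stage the sequence $\la \set_{p_\beta}(\nu) \such \beta<\lambda\ra$ has at most two values whose intersection has size $\kappa$), and it obtains a splitting node above a given $t\in p_\lambda$ by taking the splitting node of $p_0$ above $t$ of minimal enumeration index, which by property \eqref{gut} serves exactly the same purpose as your minimal-length choice. Your explicit auxiliary claim $(\ast)$ merely packages what the paper leaves implicit.
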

  
\begin{proof} We go by induction on $\lambda$. The case $\lambda =0$ and the successor steps are obvious. So we assume that $\lambda \leq \kappa^{<\kappa}$ is a limit ordinal and
$p_\alpha \in \bQ^2_\kappa$ for $\alpha < \lambda$. Since $\emptyset \in p_\lambda$, $p_\lambda$ is not empty, and $p_\lambda$ clearly is a tree.
  Let $t \in p_\lambda$. We show that there is $t' \trianglerighteq t$ that is a splitting node in $p_\lambda$.
\nothing{
  First we state some general consideration: Suppose we are at stage $\beta < \kappa^{<\kappa}$ in the inductive construction. 
The choice of the $p_\gamma$, $\gamma \leq \beta$,  equations \eqref{gut} and \eqref{ast1} together imply that
    \[\nu_\beta \in \splitt(p_\beta) \setminus \{ \nu \such (\exists \gamma < \beta)
    (\nu \trianglelefteq \nu_\gamma)\}.\]
Moreover, Equation
   in fusion sequence definition  yields
    $\suc_{p_{\beta+1}}(\nu_\beta) = c_\beta$, so $\nu_\beta \in \splitt(p_{\beta+1})$. Hence for any $\nu_\beta \in \splitt(p_\beta) \setminus \{\nu_\gamma \such \gamma < \beta\}$ we have for any $\delta \in [\beta+1, \kappa^{<\kappa}]$:
    \begin{equation} \label{ast3}
      \suc_{p_{\beta+1}}(\nu_\beta) = \suc_{p_\delta}(\nu_\beta) \wedge \nu_\beta \in \splitt(p_\delta).
    \end{equation}
}
We fix the smallest $\alpha$ such that $\nu_\alpha \trianglerighteq_{p_0} t$ is a splitting  node
in $p_0$. Then in $p_0$ there are no splitting nodes
in $\{s \such t \trianglelefteq s \triangleleft \nu_\alpha \}$.
Hence $\nu_\alpha \in \splitt(p_{\beta})$ for any $\beta \in [0,\lambda]$.
      
Now we show that
the limit of splitting nodes in $p_\lambda$ is a splitting node.
Let $\gamma < \lambda$ and let $\la \nu^{i} \such i < \gamma \ra$ be an $\triangleleft$-increasing sequence of splitting nodes of $p_\lambda$ with union
$\nu \in \kappa^{<\kappa}$.
Then $\nu$ is a splitting node of each $p_\alpha$, $\alpha < \lambda$,
and also in $p_\lambda$ since $\la \set_{p_\alpha}(\nu) \such \alpha < \lambda\ra$
has at most two entries and their intersection has size $\kappa$.
\end{proof}

We need yet another type of fusion sequence.

\begin{definition} \label{front}
  Let $p \in \bQ^2_\kappa$ and let $\nu \in \splitt(p)$.
  \begin{myrules}
  \item[(1)]
    Let
  $i \in \set_p(\nu)$.
  We say $\eta$  is \emph{the shortest
  splitting node above $\nu \concat \la i \ra$ in $p$}
  and write $\eta = \nex_p(\nu \concat i)$  if
  $\eta$ is the shortest splitting point in $p$ such that $\eta \supseteq \nu \concat\la i \ra$. Equality is allowed.
  \item[(2)]
    We say $F \subseteq p$ is \emph{the front of next splitting nodes
      above $\nu$ in $p$}, if
  \[ F = \{ \eta' \in \splitt(p) \such
  \exists (\eta \in \suc_p(\nu))(\eta' = \nex_p(\eta))\}.
  \]
  \end{myrules}
  \end{definition}

\begin{lemma} \label{fusionstark2}
  Let $\la \nu_\alpha \such \alpha < \kappa^{<\kappa} \ra$ be an
  injective enumeration of $\kappa^{<\kappa}$ such that
  \begin{equation} \label{gut}
    \nu_\alpha \triangleleft \nu_\beta \rightarrow \alpha < \beta.
    \end{equation}
    Let $\la p_\alpha, \nu_\alpha, c_\alpha, F_{\alpha} \such \alpha < \kappa^{<\kappa} \ra$ be a sequence
    such that for any $\alpha \leq \lambda$ the following holds:
    \begin{myrules}
      \item[(a)] $p _0 \in \bQ^2_\kappa$.
    \item[(b1)] If $\alpha = \beta +1 < \kappa^{<\kappa}$ 
      and $\nu_\beta \in sp(p_\beta)$, then 
$c_\beta  \in [\suc_{p_\beta}(\nu_\beta)]^\kappa$,
$F_{\beta}$ contains for each $i \in c_\beta$ exactly one
$\eta \in \splitt(p_\beta^{\la \nu_\beta \concat \la i \ra \ra})$, and
      \begin{equation*}
        \begin{split}
          p_{\alpha}= p_\beta(\nu_\beta, c_\beta, F_{\beta}) := & \bigcup\{p_\beta^{\la \eta \ra} \such i \in c_\beta, \eta \in F_{\beta} \}\\
  &
      \cup \bigcup\{p_\beta^{\la \eta \ra} \such \eta \not\trianglelefteq \nu_\beta
      \wedge \nu_\beta \not\trianglelefteq \eta\}.
\end{split}
      \end{equation*}
      Note that this implies
     that $F_\beta$ is the front of next splitting nodes of $p_\alpha$ above $\nu_\beta$.
      \item[(b2)]
If $\alpha = \beta +1 < \kappa^{<\kappa}$ 
and $\nu_\beta \not\in \splitt(p_\beta)$ then $p_\alpha= p_\beta$.
  \item[(c)] $p_\alpha = \bigcap \{p_\beta \such \beta<\alpha \}$ for limit $\alpha \leq \kappa^{<\kappa}$.
    \end{myrules}
    Then for any $\lambda \leq \kappa^{<\kappa}$, $p_\lambda \in \bQ^2_\kappa$ and $\forall \beta < \lambda$, $p_\beta \leq_{\bQ^2_\kappa} p_\lambda$.
  \end{lemma}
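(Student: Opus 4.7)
The proof proceeds by induction on $\lambda$. The base case $\lambda = 0$ is given. For the successor case $\lambda = \beta + 1$, clause (b2) is trivial since $p_\lambda = p_\beta$. For (b1), where $p_\lambda = p_\beta(\nu_\beta, c_\beta, F_\beta)$, I verify the three defining properties of $\bQ^2_\kappa$ by a case analysis on $s \in p_\lambda$ relative to $\nu_\beta$: if $s$ is $\triangleleft$-incomparable with $\nu_\beta$, a splitting node of $p_\beta$ above $s$ must also be incomparable with $\nu_\beta$ (else $\trianglerighteq s$ and $\trianglerighteq \nu_\beta$ would force a $\triangleleft$-comparison of $s$ and $\nu_\beta$), so it survives in $p_\lambda$; if $s \trianglelefteq \nu_\beta$, then $\nu_\beta \in \splitt(p_\lambda)$ with $|\suc_{p_\lambda}(\nu_\beta)| = |c_\beta| = \kappa$ serves; if $\nu_\beta \triangleleft s \trianglelefteq \eta_i$ for some $\eta_i \in F_\beta$, then $\eta_i \in \splitt(p_\lambda)$ works; and if $\eta_i \triangleleft s$ then superperfectness of $p_\beta^{\la \eta_i \ra} \subseteq p_\lambda$ supplies a splitting node. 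Closure under splitting limits of length $<\kappa$ is handled by the same four cases applied to a tail of the increasing sequence.

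At a limit $\lambda$, the set $p_\lambda = \bigcap_{\alpha < \lambda} p_\alpha$ is immediately a non-empty tree. For closure under splitting limits, suppose $\la \mu_i \such i < \delta \ra$ is $\triangleleft$-increasing in $\splitt(p_\lambda)$ with $\delta < \kappa$ and $\mu = \bigcup_i \mu_i$ of length $<\kappa$. Since $\suc_{p_\alpha}(\mu_i) \supseteq \suc_{p_\lambda}(\mu_i)$ has size $\kappa$, each $\mu_i$ lies in $\splitt(p_\alpha)$ for every $\alpha < \lambda$; by the inductive closure, $\mu \in \splitt(p_\alpha)$ for all $\alpha < \lambda$. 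The ``one-successor'' phenomenon (where $\mu$ lies strictly between some $\nu_\gamma \concat \la i \ra$ and $\eta_{i,\gamma}$) is therefore excluded, so $\suc_{p_\alpha}(\mu)$ changes with $\alpha$ only at the unique stage $\gamma$ with $\nu_\gamma = \mu$, after which it equals $c_\gamma$ of size $\kappa$ and remains so. Hence $|\suc_{p_\lambda}(\mu)| = \kappa$.

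The main obstacle is $\kappa$-superperfectness at the limit, because step (b1) can destroy a splitting node $\nu$ of $p_\gamma$ when $\nu$ lies strictly between some $\nu_\gamma \concat \la i \ra$ and $\eta_{i,\gamma}$, a phenomenon absent from Lemma~\ref{fusionstark}. My plan is to adapt the minimality argument of that lemma. Given $t \in p_\lambda$, pick $\nu^0$ a $\triangleleft$-shortest splitting node of $p_0$ above $t$; if $\nu^0 \in \splitt(p_\lambda)$, we are done. Otherwise $\nu^0$ is killed at a first stage $\gamma_1 + 1 < \lambda$, and minimality of $\nu^0$ combined with $\splitt(p_{\gamma_1}) \subseteq \splitt(p_0)$ forces $\nu_{\gamma_1} \triangleleft t$ strictly, so the branch of $t$ (hence of $\nu^0$) passes through $\nu_{\gamma_1} \concat \la j \ra$ for some $j \in c_{\gamma_1}$ with $\nu^0 \triangleleft \eta_{j,\gamma_1}$. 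Setting $\nu^1 := \eta_{j,\gamma_1} \in F_{\gamma_1}$ then yields a splitting node of $p_{\gamma_1+1}$ strictly above $\nu^0 \trianglerighteq t$. Iterate transfinitely: at successor steps take the first killing stage, at limits of length $\delta$ set $\nu^\delta := \bigcup_{k<\delta} \nu^k$, and note that for each $\beta \leq \sup_k \gamma_k$ a cofinal tail of $\la \nu^k \ra$ lies in $\splitt(p_\beta)$, so the inductive closure of $p_\beta$ gives $\nu^\delta \in \splitt(p_\beta)$.

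The recursion produces the desired splitting node of $p_\lambda$ above $t$ either when some $\nu^k$ stops being killed (so $\nu^k \in \splitt(p_\lambda)$) or when at some limit step $\sup_k \gamma_k = \lambda$, in which case $\nu^\delta \in \splitt(p_\beta)$ for every $\beta < \lambda$, hence $\nu^\delta \in \splitt(p_\lambda)$ by the eventual-constancy argument established in the closure proof. The delicate point I expect to be the main obstacle is ensuring that the auxiliary recursion actually closes before the lengths $\lg(\nu^k)$ exhaust $\kappa$: this is automatic when $\cf(\lambda) < \kappa$, since the strictly increasing sequence $\la \gamma_k \ra$ has length at most $\cf(\lambda)$, while in the boundary case $\cf(\lambda) \geq \kappa$ it will require a separate pigeonhole argument exploiting $\lambda \leq \kappa^{<\kappa}$ together with the strict monotonicity of both $\la \gamma_k \ra$ in $\lambda$ and $\la \lg(\nu^k) \ra$ in $\kappa$.
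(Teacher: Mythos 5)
You correctly identify the real difficulty here, which the paper's own proof (essentially a verbatim copy of the proof of Lemma~\ref{fusionstark}) glosses over: the operation $p_\beta(\nu_\beta,c_\beta,F_\beta)$ can destroy splitting nodes lying strictly between $\nu_\beta\concat\la i\ra$ and the member of $F_\beta$ above it, so the first splitting node of $p_0$ above $t$ need not stay splitting. Your chase is the right idea, but it has a genuine gap exactly where you flag ``the delicate point'': termination. First, a strictly increasing sequence of ordinals below $\lambda$ can have any order type up to $\lambda$, not $\cf(\lambda)$, so your bound on the length of $\la\gamma_k\ra$ is wrong, and the promised ``separate pigeonhole argument'' for the case where $\sup_k\lg(\nu^k)$ might reach $\kappa$ is never supplied. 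Second, at a killing stage the relevant front element $\eta_{j,\gamma_{k+1}}$ need not lie above $\nu^k$: it can be incomparable with $\nu^k$ (so that $\nu^k$ is removed from the tree rather than merely de-split), and then the $\triangleleft$-increase of the $\nu^k$, on which your limit step depends, is not established.

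Both problems disappear once one adds the key observation your argument is missing. If $\nu^k$ is a $\triangleleft$-minimal splitting node of $p_{\gamma_k+1}$ above $t$ and it fails to be splitting in $p_{\gamma_{k+1}+1}$, then $\nu_{\gamma_{k+1}}$ must be a \emph{proper initial segment of $t$}: the alternative $t\trianglelefteq\nu_{\gamma_{k+1}}\triangleleft\nu^k$ contradicts minimality, since $\nu_{\gamma_{k+1}}\in\splitt(p_{\gamma_{k+1}})\subseteq\splitt(p_{\gamma_k+1})$. Writing $j=t(\lg(\nu_{\gamma_{k+1}}))$, the survival of $t$ forces $j\in c_{\gamma_{k+1}}$ and $t\triangleleft\eta_j\in F_{\gamma_{k+1}}$ (if $\eta_j\trianglelefteq t$ nothing above $t$ is touched), and then every node $s$ with $\nu_{\gamma_{k+1}}\concat\la j\ra\trianglelefteq s\triangleleft\eta_j$ --- in particular every $t\rest l$ with $\lg(\nu_{\gamma_{k+1}})<l\leq\lg(t)$ --- has exactly one successor in $p_{\gamma_{k+1}+1}$ and is therefore non-splitting in all later conditions. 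Hence the levels $\lg(\nu_{\gamma_1})>\lg(\nu_{\gamma_2})>\dots$ form a strictly decreasing sequence of ordinals, the chase terminates after finitely many steps, no limit stage of the auxiliary recursion ever occurs, and the last $\nu^k$ (namely $\eta_j\triangleright t$, the unique minimal splitting node of $p_{\gamma_k+1}$ above $t$) remains in $\splitt(p_\beta)$ for all $\beta\leq\lambda$ by your eventual-constancy argument for its successor set. With this finiteness observation inserted your proof closes; without it, termination of the recursion --- and hence superperfectness of $p_\lambda$ --- is not established.
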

  
\begin{proof} We go by induction on $\lambda$. The case $\lambda =0$ and the successor steps are obvious. So we assume that $\lambda \leq \kappa^{<\kappa}$ is a limit ordinal and
$p_\alpha \in \bQ^2_\kappa$ for $\alpha < \lambda$. Since $\emptyset \in p_\lambda$, $p_\lambda$ is not empty, and $p_\lambda$ clearly is a tree.
  Let $t \in p_\lambda$. We show that there is $t' \trianglerighteq t$ that is a splitting node in $p_\lambda$.
\nothing{
  First we state some general consideration: Suppose we are at stage $\beta < \kappa^{<\kappa}$ in the inductive construction. 
The choice of the $p_\gamma$, $\gamma \leq \beta$,  equations \eqref{gut} and \eqref{ast1} together imply that
    \[\nu_\beta \in \splitt(p_\beta) \setminus \{ \nu \such (\exists \gamma < \beta)
    (\nu \trianglelefteq \nu_\gamma)\}.\]
Moreover, Equation
    \eqref{ast2}  yields
    $\suc_{p_{\beta+1}}(\nu_\beta) = c_\beta$, so $\nu_\beta \in \splitt(p_{\beta+1})$. Hence for any $\nu_\beta \in \splitt(p_\beta) \setminus \{\nu_\gamma \such \gamma < \beta\}$ we have for any $\delta \in [\beta+1, \kappa^{<\kappa}]$:
    \begin{equation} \label{ast3}
      \suc_{p_{\beta+1}}(\nu_\beta) = \suc_{p_\delta}(\nu_\beta) \wedge \nu_\beta \in \splitt(p_\delta).
    \end{equation}
}
We fix the smallest $\alpha$ such that $\nu_\alpha \trianglerighteq_{p_0} t$ is a splitting  node
in $p_0$. Then in $p_0$ there are no splitting nodes
in $\{s \such t \trianglelefteq s \triangleleft \nu_\alpha \}$.
Hence $\nu_\alpha \in \splitt(p_{\beta})$ for any $\beta \in [0,\lambda]$.
      
Now we show that
the limit of splitting nodes in $p_\lambda$ is a splitting node.
Let $\gamma < \lambda$ and let  $\la \nu^{i} \such i < \gamma \ra$ be an $\triangleleft$-increasing sequence of splitting nodes of $p_\lambda$ with union
$\nu \in \kappa^{<\kappa}$.
Then $\nu$ is a splitting node of each $p_\alpha$, $\alpha < \lambda$,
and also in $p_\lambda$ since $\la \set_{p_\alpha}(\nu) \such \alpha < \lambda\ra$
has at most two entries and their intersection has size $\kappa$.
\end{proof}

In the special case $F_\beta = \{\nu_\beta \concat \la j \ra \such j \in c_\beta\}$, the construction
of Lemma~\ref{fusionstark2} coincides with the simpler construction from Lemma~\ref{fusionstark}.

\begin{definition}\label{QcT}
We assume $\bQ^1_\kappa$ collapses $2^\kappa$ to $\omega$.  Let $\name{\tau}$ and ${\mathcal T} = \la (a_\eta,n_\eta,\varrho) \such \eta \in {}^{\omega>}(2^\kappa)\ra$
  be as in Lemma \ref{inserted}.
Now let $Q_{\cT}$ be the set of $\kappa$-Miller trees $p$ such that for every 
$\nu \in \splitt(p)$ there is $\eta_{p,\nu} = \eta_\nu \in {}^{\omega>} (2^\kappa)$
such that 
\begin{equation}\label{witness}
  \set_p(\nu)
= \{ \eps \in \kappa \such \nu \concat \la \eps \ra \in p \} = a_{\eta_\nu}.
\end{equation}
\nothing{\item[(b)] $|\{\varrho_{\eta_\nu \concat \la \alpha \ra} \rest n_\eta \such 
  \alpha < 2^\kappa \wedge \set_p(\nu) \cap a_{\eta_\nu \concat \la \alpha \ra} \in [\kappa]^\kappa \}| \geq 2$.
\footnote{ We could get $2^\kappa$ instead of $2$ here.}}
\end{definition}

By the properties of $\cT$, the node $\eta_{p,\nu}$ is unique.

\begin{lemma}\label{QcTdense}
Assume that $\bQ^1_\kappa$ collapses $2^\kappa$ to $\omega$, let
 $\cT$  be chosen as in Lemma~\ref{inserted}, and let $Q_{\cT}$
be defined from $\cT$ as above. Then $Q_{\cT}$ is dense in $\bQ^2_\kappa$.
\end{lemma}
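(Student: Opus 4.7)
The plan is to fix an arbitrary $p^* \in \bQ^2_\kappa$ and build $q \leq_{\bQ^2_\kappa} p^*$ lying in $Q_{\cT}$ by a fusion of length $\kappa^{<\kappa}$ as in Lemma~\ref{fusionstark}. I would set $p_0 := p^*$, fix an injective enumeration $\la \nu_\alpha \such \alpha < \kappa^{<\kappa}\ra$ of $\kappa^{<\kappa}$ satisfying~(\ref{gut}), and at each successor stage $\alpha = \beta+1$ act as follows: if $\nu_\beta \not\in \splitt(p_\beta)$, let $p_\alpha := p_\beta$; otherwise $\set_{p_\beta}(\nu_\beta) \in [\kappa]^\kappa$, and property (d) of $\cT$ provides some $\eta_\beta \in {}^{\omega>}(2^\kappa)$ with $a_{\eta_\beta} \subseteq \set_{p_\beta}(\nu_\beta)$, so I put $c_\beta := a_{\eta_\beta}$ and $p_\alpha := p_\beta(\nu_\beta, c_\beta)$. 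At limit stages take intersections. By Lemma~\ref{fusionstark}, $q := p_{\kappa^{<\kappa}}$ lies in $\bQ^2_\kappa$ and is a subtree of $p^*$.

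To check $q \in Q_{\cT}$, I would take any $\nu \in \splitt(q)$, write $\nu = \nu_\beta$, and first argue that $\nu_\beta \in \splitt(p_\beta)$: otherwise $|\set_{p_\beta}(\nu_\beta)| < \kappa$, and $q \subseteq p_\beta$ forces $|\set_q(\nu_\beta)| < \kappa$, a contradiction. Hence at stage $\beta+1$ an $\eta_\beta$ and $c_\beta = a_{\eta_\beta}$ were fixed, and it suffices to prove $\set_q(\nu_\beta) = c_\beta$; then $\eta_\beta$ will be the required witness for~(\ref{witness}). I would establish $\set_{p_\gamma}(\nu_\beta) = c_\beta$ for every $\gamma \in [\beta+1, \kappa^{<\kappa}]$ by induction on $\gamma$. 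The base is immediate from the construction. In a successor step with $\nu_\gamma \in \splitt(p_\gamma)$, condition~(\ref{gut}) and injectivity together with $\gamma > \beta$ give $\nu_\gamma \not\trianglelefteq \nu_\beta$, so there are two cases: either $\nu_\gamma \perp \nu_\beta$, in which case every $\nu_\beta \concat \la i \ra$ with $i \in c_\beta$ is perpendicular to $\nu_\gamma$ and is retained by the second union in $p_\gamma(\nu_\gamma, c_\gamma)$; or $\nu_\gamma$ strictly extends some unique $\nu_\beta \concat \la i^* \ra$, and then necessarily $i^* \in c_\beta$ (otherwise $\nu_\gamma \not\in p_\gamma$), so $\nu_\beta \concat \la i^* \ra$ survives as an initial segment of any $\nu_\gamma \concat \la j \ra$ with $j \in c_\gamma$, while the remaining $\nu_\beta \concat \la i \ra$ are perpendicular to $\nu_\gamma$ and retained by the second union. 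Limit stages are immediate since intersecting copies of the fixed set $c_\beta$ yields $c_\beta$.

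The only delicate point --- and the main bookkeeping obstacle --- is exactly the case analysis above: ruling out that some later fusion step could remove an immediate successor $\nu_\beta \concat \la i \ra$ with $i \in c_\beta$ from the tree. It is precisely condition~(\ref{gut}) on the enumeration that makes this go through, ensuring that any later stage $\gamma > \beta$ touches the tree only at nodes $\nu_\gamma \not\trianglelefteq \nu_\beta$; everything else is routine.
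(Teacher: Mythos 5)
Your proposal is correct and follows essentially the same route as the paper: a fusion of length $\kappa^{<\kappa}$ in the sense of Lemma~\ref{fusionstark}, shrinking $\suc_{p_\beta}(\nu_\beta)$ to some $a_{\eta_\beta}\subseteq \set_{p_\beta}(\nu_\beta)$ supplied by clause (d) of Lemma~\ref{inserted}, and then observing that $\set_{p_\gamma}(\nu_\beta)=c_\beta$ persists for all $\gamma>\beta$ thanks to condition~\eqref{gut}. The only (immaterial) differences are that the paper picks $\eta(\alpha)$ of minimal length to make the witness canonical, and that it asserts the persistence of $\set_{p_\delta}(\nu_\alpha)=a_{\eta(\alpha)}$ ``by construction'' where you spell out the inductive case analysis.
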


\begin{proof}
Let $p_0 = T \in \bQ^2_\kappa$.  Let $\la \nu_\alpha \such \alpha<\kappa^{<\kappa} \ra$ be an injective enumeration of $\kappa^{<\kappa}$ with property \eqref{gut}.
We now define fusion sequence
$\la p_\alpha,\nu_\alpha, c_\alpha \such \alpha \leq \kappa^\kappa\ra$ according
to the pattern in Lemma \ref{fusionstark} in order to find
$p_{\kappa^{<\kappa}} \geq T$ such that $p_{\kappa^{<\kappa}} \in Q_{\cT}$.

Suppose that $p_\alpha$ and $\nu_\alpha$ are given.
If $\nu_\alpha $ is not in $p_\alpha$ or is not a
splitting node in $p_\alpha$,
then we let $p_{\alpha+1} = p_\alpha$.
If $\nu_\alpha \in \splitt(p_\alpha)$, then
according to Lemma~\ref{inserted} clause (d) there is
$\eta \in {}^{\omega>} (2^\kappa)$ such that $\suc_{p_\alpha}(\nu_\alpha) \supseteq a_\eta$.
We choose such an  $\eta$ of minimal length and call it $\eta(\alpha)$.

Then we strengthen $p_\alpha$ to
\begin{equation}\label{allowed}
  \begin{split}
    p_{\alpha+1} = & \bigcup\{p_\alpha^{\la \nu' \ra } \such \nu' = \nu_\alpha \concat \la i \ra \wedge i \in a_{\eta(\alpha)}\}
  \cup \\
  & \bigcup\{p_\alpha^{\la \eta \ra} \such \eta \not\trianglelefteq \nu_\alpha
      \wedge \nu_\alpha \not\trianglelefteq \eta\}
.\end{split}
  \end{equation}

Now we have that
\[\eta_{p_{\alpha+1},\nu_\alpha}=\eta(\alpha), c_\alpha = a_{\eta(\alpha)}.\]
\nothing{
We cannot change it sidewards, by almost disjointness of the $a_\eta$.
We cannot replace $\eta$ by a proper extension, since $\suc_{p_{\alpha+1}}(\nu_\alpha)= a_\eta$ is not a subset of any of the $a_{\eta \concat \la \gamma \ra}$. We cannot replace $\eta$
by one of its proper initial segments, since
such a segment $\eta \rest i$, $i < \lg(\eta)$ fulfils
$a_{\eta \rest i} \not\subseteq \set_{p_{\alpha}}(\nu_\alpha)$
and a fortiori  $a_{\eta \rest i} \not\subseteq \set_{p_{\alpha+1}}(\nu_\alpha)$, since we
chose the shortest $\eta$.
}
For limit ordinals $\lambda \leq \kappa^{<\kappa}$, we let $p_\lambda = \bigcap\{p_\beta \such \beta < \lambda\}$. Since the sequence
$\la p_\alpha, \nu_\alpha, c_\alpha \such \alpha \leq \kappa^{<\kappa} \ra$ matches the pattern in Lemma \ref{fusionstark}, we have $p_{\kappa^{<\kappa}} \in \bQ^2_\kappa$.
By construction, for any
$\alpha < \kappa^{<\kappa}$ for any $\delta \in[\alpha+1,\kappa^{<\kappa})$, $\nu_\alpha \in \splitt(p_\delta)$ implies
  \[
  \set_{p_{\alpha+1}}(\nu_\alpha) = \set_{p_\delta}(\nu_\alpha)=a_{\eta(\alpha)}.
  \]
  Hence the condition $p=p_{\kappa^{<\kappa}}$ fulfils Equation~\eqref{witness}
  in its splitting node $\nu_\alpha$ with  witness
$\eta_{p,\nu_\alpha}= \eta(\alpha)$. Since all nodes are
enumerated, we have $p_{\kappa^{<\kappa}} \in Q_{\cT}$.
 \end{proof}

We use only the inclusion $\set_p(\nu) \subseteq a_{\eta_\nu}$ from Definition \ref{QcT}.

\begin{definition}\label{varrho} We assume that $\bQ^1_\kappa$ collapses $2^\kappa$ to $\omega$ and the ${\mathcal T}$ is as in Lemma~\ref{inserted}.
For $T \in Q_{\cT}$ and a  splitting node $\nu$
of $T$ we set $\varrho_{T,\nu} := \varrho_{\eta_{T,\nu}} \in {}^{\omega>} (2^\kappa)$.
Recall $\eta_{T,\nu}$ is defined in Def.~\ref{QcT}, and $\varrho$ is
a component of ${\mathcal T}$.
\end{definition}

For $p \in Q_{\cT}$, the relation $\nu \trianglelefteq \nu' \in p$ 
does neither imply $\eta_{\nu} \trianglelefteq \eta_{\nu'}$ nor $\varrho_\nu \trianglelefteq \varrho_{\nu'}$.
However, $\eta_\nu \triangleleft \eta_{\nu'}$ implies $a_{\eta_\nu} \supset a_{\eta_{\nu'}}$ and $ \varrho_\nu \triangleleft \varrho_{\nu'}$.

\begin{observation} We assume that $\bQ^1_\kappa$ collapses $2^\kappa$ to $\omega$.
  Let $p_1, p_2 \in Q_{\cT}$.
If $p_1 \leq_{\bQ^2_\kappa} p_2$ then 
for $\nu \in \splitt(p_2)$ we have $\nu \in \splitt(p_1)$ and $\varrho_{p_1,\nu} \trianglelefteq \varrho_{p_2,\nu}$.
\end{observation}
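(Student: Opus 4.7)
The plan is as follows. First I would unpack the hypothesis: $p_1 \leq_{\bQ^2_\kappa} p_2$ means $p_2 \subseteq p_1$ as trees, so $\set_{p_2}(\nu) \subseteq \set_{p_1}(\nu)$ for every node $\nu$. Hence if $|\set_{p_2}(\nu)| = \kappa$, the same is true in $p_1$, and so $\nu \in \splitt(p_1)$ whenever $\nu \in \splitt(p_2)$. This disposes of the first assertion immediately.

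Next, write $\eta_i = \eta_{p_i,\nu}$ for $i=1,2$, the unique witnesses in ${}^{\omega >}(2^\kappa)$ provided by membership in $Q_{\cT}$. Definition~\ref{QcT} gives $a_{\eta_i} = \set_{p_i}(\nu)$, so $a_{\eta_2} \subseteq a_{\eta_1}$ and both sets have cardinality $\kappa$. The heart of the matter is to show that this forces $\eta_1 \trianglelefteq \eta_2$; I would argue by contradiction, ruling out the two alternative configurations.

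Suppose first that $\eta_1$ and $\eta_2$ are $\triangleleft$-incomparable, and let $\eta^* = \eta_1 \cap \eta_2$ be their longest common initial segment, so $\eta_i \trianglerighteq \eta^* \concat \la \alpha_i \ra$ for distinct $\alpha_1,\alpha_2 \in 2^\kappa$. By clause (b) of Lemma~\ref{inserted} we have $a_{\eta_i} \subseteq a_{\eta^* \concat \la \alpha_i \ra}$, and by clause (e) the family $\{a_{\eta^* \concat \la \alpha \ra} \such \alpha < 2^\kappa\}$ is $\kappa$-almost disjoint; hence $|a_{\eta_1} \cap a_{\eta_2}| < \kappa$, which together with $a_{\eta_2} \subseteq a_{\eta_1}$ and $|a_{\eta_2}| = \kappa$ is a contradiction. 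Suppose instead that $\eta_2 \triangleleft \eta_1$ strictly, so $\eta_1 \trianglerighteq \eta_2 \concat \la \alpha_0 \ra$ for some $\alpha_0$. Pick any $\alpha_1 \in 2^\kappa \setminus \{\alpha_0\}$; then by (b) again $a_{\eta_2 \concat \la \alpha_1 \ra} \subseteq a_{\eta_2} \subseteq a_{\eta_1} \subseteq a_{\eta_2 \concat \la \alpha_0 \ra}$, so $a_{\eta_2 \concat \la \alpha_0 \ra} \cap a_{\eta_2 \concat \la \alpha_1 \ra} = a_{\eta_2 \concat \la \alpha_1 \ra}$ has size $\kappa$, contradicting $\kappa$-almost disjointness once more. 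Thus $\eta_1 \trianglelefteq \eta_2$.

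Finally I invoke the remark immediately preceding the observation: $\eta_1 \trianglelefteq \eta_2$ implies $\varrho_{\eta_1} \trianglelefteq \varrho_{\eta_2}$, which is exactly $\varrho_{p_1,\nu} \trianglelefteq \varrho_{p_2,\nu}$ by Definition~\ref{varrho}. The main (and essentially only) obstacle is the two-case analysis establishing $\eta_1 \trianglelefteq \eta_2$, and it is driven entirely by the $\kappa$-ad property of the splitting layers of $\cT$; no additional combinatorics beyond Lemma~\ref{inserted} is needed.
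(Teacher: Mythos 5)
The paper states this as an Observation and omits the proof entirely, so there is nothing to compare against; your argument is correct and is the natural justification. Both halves check out: the first claim is immediate from $p_2 \subseteq p_1$, and your two-case exclusion via the $\kappa$-almost-disjointness of the families $\{a_{\eta\concat\la\alpha\ra} \such \alpha < 2^\kappa\}$ (clause (e) of Lemma~\ref{inserted}) correctly forces $\eta_{p_1,\nu} \trianglelefteq \eta_{p_2,\nu}$, after which the remark preceding the Observation (together with the trivial case $\eta_{p_1,\nu}=\eta_{p_2,\nu}$) yields $\varrho_{p_1,\nu}\trianglelefteq\varrho_{p_2,\nu}$.
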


We introduce dense sets:

\begin{definition}\label{Dn}  We assume that $\bQ^1_\kappa$ collapses $2^\kappa$ to $\omega$.
    Let $n \in \omega$.
    \begin{equation*}
          D_n = \bigl\{p \in Q_{\cT} \such  (\forall \nu \in \splitt(p)) 
      (\lg(\varrho_{p,\nu}) > n)\}.
      \end{equation*}
\end{definition}

$D_n$ is open dense in $Q_{\cT}$ and the intersection of the $D_n$ is empty.
The following technical lemma is the first step of a transformation of a $\bQ^1_\kappa$-name of a surjection from $\omega$  onto $2^\kappa$ into a
$\bQ^2_\kappa$-name of such a surjection.

\begin{lemma}\label{long2}
  We assume that $\bQ^1_\kappa$ collapses $2^\kappa$ to $\omega$, $\cf(\kappa) > \omega$ and $2^{(\kappa^{<\kappa})} = 2^\kappa$.
 Let $\la T_{\alpha} \such \alpha < 2^\kappa \ra$ enumerate $\bQ^2_\kappa$ such that each Miller tree appears $2^\kappa$ times.
There is $\la (p_\alpha, n_\alpha, \bar{\gamma}_\alpha) \such \alpha < 2^\kappa\ra$ such that
\begin{myrules}
\item[(a)] $n_\alpha < \omega$,
\item[(b)] $p_\alpha \in D_{n_\alpha}$ and $p_\alpha \geq T_\alpha$. 
\item[(c)] If $\beta < \alpha$ and $n_\beta \geq n_\alpha$ then $p_\beta \perp p_\alpha$.
\item[(d)]
  $\bar{\gamma}_\alpha = \la \gamma_{\alpha,\nu} \such \nu \in \splitt(p_\alpha)\ra$.
\item[(e)] $(\forall \nu \in \splitt(p_\alpha))
  (a_{\eta_{p_\alpha,\nu}} \Vdash_{\bQ^1_\kappa} \gamma_{\alpha,\nu} \in \rge(\varrho_{p_\alpha,\nu}))$.

 \item[(f)]
  $\gamma_{\alpha,\nu} \in 2^\kappa \setminus W_{<\alpha,\nu}$ with
  \[W_{<\alpha,\nu} = \bigcup\{\rge(\varrho_{p_\beta,\nu}) \such \beta < \alpha,
  \nu \in \splitt(p_\beta)\}.\]
\end{myrules}
\end{lemma}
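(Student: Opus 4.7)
My plan is a transfinite recursion on $\alpha < 2^\kappa$. Using $|\bQ^2_\kappa| \leq 2^{\kappa^{<\kappa}} = 2^\kappa$, I would enumerate $\bQ^2_\kappa$ as $\la T_\alpha \such \alpha < 2^\kappa \ra$ with each tree appearing $2^\kappa$ times. At stage $\alpha$, I would pick $n_\alpha < \omega$ suitably: by the inductive form of clause (c), at most one $\beta$ per value of $n_\beta$ can have $T_\alpha \subseteq p_\beta$ (two such would be compatible via $T_\alpha$, contradicting (c) at the later of them), so with appropriate bookkeeping on the distribution of the $n_\alpha$'s there is always an $n_\alpha < \omega$ such that no $\beta<\alpha$ with $n_\beta\geq n_\alpha$ has $T_\alpha\subseteq p_\beta$. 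Set $\cB_\alpha=\{\beta<\alpha \such n_\beta\geq n_\alpha\}$, so $|\cB_\alpha| < 2^\kappa$ and $|W_{<\alpha,\nu}| < 2^\kappa$ for every $\nu$.

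I would then strengthen $T_\alpha$ to $T^{(0)} \in Q_\cT \cap D_{n_\alpha}$ via (a variant of) Lemma~\ref{QcTdense}, ensuring $\lg(\varrho_{\eta(\delta)}) > n_\alpha$ at each fusion step by passing to sufficiently long $\cT$-extensions, while preserving a witness $\mu_\beta \in T_\alpha \setminus p_\beta$ in $T^{(0)}$ for each $\beta \in \cB_\alpha$. Next, build $p_\alpha \leq T^{(0)}$ via a second fusion of Lemma~\ref{fusionstark} type. At step $\delta$ with $\nu_\delta \in \splitt(p^\delta)$ and $\eta^* = \eta_{p^\delta, \nu_\delta}$, take $\eta^\delta = \eta^* \concat \la \beta^\delta \ra$ for $\beta^\delta \in 2^\kappa$ subject to: (i) $\rge(\varrho_{\eta^\delta}) \not\subseteq W_{<\alpha, \nu_\delta}$; (ii) $\eta^\delta \perp_\cT \eta_{p_\beta, \nu_\delta}$ for every $\beta \in \cB_\alpha$ with $\nu_\delta \in \splitt(p_\beta)$ and $\eta_{p_\beta, \nu_\delta} \not\triangleleft \eta^*$. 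Constraint (i) forbids $< 2^\kappa$ values of $\beta^\delta$ by clause~(1.5) of Lemma~\ref{inserted} (the $2^\kappa$-many $\varrho_{\eta^* \concat \la \beta \ra}$ are pairwise distinct extensions of $\varrho_{\eta^*}$); (ii) forbids at most one choice per $\beta \in \cB_\alpha$, totalling $<2^\kappa$. Since $\cf(2^\kappa) > \omega$, a valid $\beta^\delta$ exists; pick one, set $c_\delta = a_{\eta^\delta}$, and intersect at limits. Then $\gamma_{\alpha, \nu} \in \rge(\varrho_{p_\alpha, \nu}) \setminus W_{<\alpha, \nu}$ is nonempty by (i), securing (e) and (f).

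Clauses (a), (b), (d), (e), (f) will be immediate from this construction. The main obstacle is clause (c). For $\beta \in \cB_\alpha$, a hypothetical super-perfect subtree $r \subseteq p_\alpha \cap p_\beta$ has all its splitting nodes in $\splitt(p_\alpha) \cap \splitt(p_\beta)$ with $\kappa$-sized common successor-sets $\set_{p_\alpha}(\nu) \cap \set_{p_\beta}(\nu)$. By the $\kappa$-ad structure of $\cT$ this forces $\eta_{p_\alpha, \nu}$ and $\eta_{p_\beta, \nu}$ to be $\cT$-comparable at every splitting node $\nu$ of $r$, and by (ii) the remaining possibility is $\eta_{p_\beta, \nu} \triangleleft \eta_{p_\alpha, \nu}$ (``Case~3''), i.e., $\set_{T^{(0)}}(\nu) \subseteq \set_{p_\beta}(\nu)$ at every such $\nu$. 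Combining this global Case~3 pattern with the preservation of $\mu_\beta \in T^{(0)} \setminus p_\beta$, some splitting node $\nu_0$ of $T^{(0)}$ on the path to $\mu_\beta$ must witness $\set_{T^{(0)}}(\nu_0) \not\subseteq \set_{p_\beta}(\nu_0)$, so at $\nu_0$ the fusion was not in the Case~3 configuration and (ii) forced $\eta_{p_\alpha, \nu_0} \perp_\cT \eta_{p_\beta, \nu_0}$, contradicting the Case~3 pattern required of $r$ along any branch through $\nu_0$. Hence no such $r$ exists, and $p_\alpha \perp p_\beta$.
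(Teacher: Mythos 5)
Your proposal has two genuine gaps, and they sit exactly where the real work of the lemma is. First, the choice of $n_\alpha$. You want to fix $n_\alpha$ in advance so that no $\beta<\alpha$ with $n_\beta\geq n_\alpha$ has $T_\alpha\subseteq p_\beta$; this is indeed necessary, since $T_\alpha\subseteq p_\beta$ makes every $p_\alpha\geq T_\alpha$ compatible with $p_\beta$ and clause (c) would be unsalvageable. Your observation that at most one such $\beta$ exists per value of $n$ only shows that the set $\{\beta<\alpha \such T_\alpha\subseteq p_\beta\}$ carries strictly increasing $n_\beta$'s; it does not rule out that these $n_\beta$ are unbounded in $\omega$, in which case \emph{no} admissible $n_\alpha$ exists and no ``bookkeeping'' can help, because the obstruction depends on $T_\alpha$, which is handed to you adversarially. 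The paper's proof determines $n_\alpha$ \emph{a posteriori} in $(\oplus)_7$: it attempts to build an increasing $\omega$-chain of splitting nodes $\nu_k$ with $r_\alpha^{\la\nu_k\ra}\geq p_{\beta_k}$, $n_{\beta_k}\geq k$, and shows that success would produce (via $\cf(\kappa)>\omega$ and the limit-splitting-node clause of Definition~\ref{1.1}) a single condition lying in all $D_k$ and hence deciding $\name{\tau}\rest k$ for every $k$, contradicting that $\name{\tau}$ names a surjection onto $2^\kappa$. That argument is the heart of the lemma and is entirely absent from your proposal; note that you never actually use $\cf(\kappa)>\omega$ in an essential way.

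Second, your verification of $p_\alpha\perp p_\beta$ does not work. Incompatibility in $\bQ^2_\kappa$ means there is no superperfect $r\subseteq p_\alpha\cap p_\beta$; a hypothetical such $r$ is under no obligation to pass through your preserved witness $\mu_\beta$ or through the node $\nu_0$ where the ``Case~3'' pattern fails. You correctly show that every splitting node of $r$ must be in Case~3, and separately that some node on the path to $\mu_\beta$ is not in Case~3, but these are different nodes, so there is no contradiction. (Preserving a node of $T_\alpha\setminus p_\beta$ only shows $p_\alpha\not\leq p_\beta$, which is far weaker than $p_\alpha\perp p_\beta$.) The paper instead arranges, above each splitting node $\nu$ and for each relevant $\beta$, a clean dichotomy $r_\alpha^{\la\nu\ra}\perp p_\beta$ or $p_\beta\leq r_\alpha^{\la\nu\ra}$ (in $(\oplus)_5$, using the front-fusion Lemma~\ref{fusionstark2} to push past nodes of $r_i\setminus p_\beta$ to new splitting levels), and then converts the ``$\leq$'' branch into the $\nu_k$-chain of $(\oplus)_7$. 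This is feasible only because the set $\cU_{\alpha,\nu}(q_\alpha)$ of $\beta$'s with $\kappa$-sized common successor set at $\nu$ is \emph{finite} --- which is precisely what the $\gamma_{\alpha,\nu}\notin W_{<\alpha,\nu}$ device buys in $(\oplus)_4$. You introduce the same $W_{<\alpha,\nu}$ machinery but never extract this finiteness, and instead try to impose one constraint per $\beta\in\cB_\alpha$ (up to $<2^\kappa$ many) at every node, which is both more than is needed and, as explained, still insufficient to rule out a common superperfect refinement.
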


  \begin{proof}
  Assume that $\la (p_\beta, n_\beta, \bar{\gamma}_\beta) \such \beta < \alpha \ra$ has been defined and we are to define $(p_\alpha,n_\alpha, \bar{\gamma}_\alpha)$.
  Note that the $p_\beta$ need not be increasing in strength.
  
  \begin{myrules}
  \item[$(\oplus)_1$]
    The choice of the $a_\eta$ in Lemma~\ref{inserted}
    and the choice $Q_{\cT}$ and of $\eta_{p_\beta,\nu}$ for $\nu \in \splitt(p_\beta)$, $\beta < \alpha$,
    imply that the set $W_{<\alpha,\nu}$ is
    well defined and of cardinality $\leq |\alpha| + \aleph_0 < 2^\kappa$.
    Hence we can choose $\gamma_{\alpha,\nu} \in 2^\kappa \setminus W_{<\alpha,\nu}$.

  \item[$(\oplus)_2$]
    With the fusion Lemma~\ref{fusionstark}
    we choose $q_\alpha \geq T_\alpha$,
    $q_\alpha \in Q_{\cT}$, such that
    \[
    (\forall \nu \in \splitt(q_\alpha))(a_{\eta_{q_\alpha,\nu}} \Vdash_{\bQ^1_\kappa}
    \gamma_{\alpha,\nu} \in \rge(\varrho_{q_\alpha,\nu})).
      \]
\nothing{
  \marginparr{read}
  {\sf We do not need that \[\bQ^1_\kappa \Vdash \forall \gamma \in 2^\kappa \exists^\infty n \name{\tau}(n) = \gamma,\]
  since we do not require that $a_{\eta_{p_\alpha,\nu}}$ forces $\name{\tau}(n) = \gamma$
  for a particular $n$. }
\nothing{The set $\{n_\beta \such \beta < \alpha\}$ could already be infinite. There is no particular candidate for fixing an $n$, since the length of
  $\varrho_{p_\alpha,\nu}$ depends on $\nu$.}
 }
\item[$(\oplus)_3$]Let $q \in \bQ^2_\kappa$. For $n \in \omega$ and $\nu \in \splitt(q)$ we let
          \[\cU_{\alpha,\nu,n}(q) = \{ \beta < \alpha \such n_\beta = n , \nu \in \splitt(p_\beta) \wedge
          |\set_{q}(\nu) \cap \set_{p_\beta}(\nu) | = \kappa \}.
          \]
          \[\cU_{\alpha, \nu}(q) = \bigcup \{\cU_{\alpha,\nu,n}(q) \such n \in \omega \}.\]
\item[$(\oplus)_4$]
  \begin{myrules}
    \item[(a)] If $n \in \omega$ and $ \nu \in \splitt(q_\alpha)$ then
  \[
  \beta \in \cU_{\alpha,\nu}(q_\alpha) \rightarrow \varrho_{p_\beta,\nu} \trianglelefteq \varrho_{q_\alpha,\nu}.
  \]
  This is seen as follows. We let $a= \set_{p_\beta}(\nu) \cap \set_{q_\alpha}(\nu)$. Since $\beta \in \cU_{\alpha,\nu}(q_\alpha)$, $a \in [\kappa]^\kappa$.
  Clearly $a \Vdash_{\bQ^1_\kappa} \name{\tau} \triangleright \varrho_{p_\beta,\nu}, \varrho_{q_\alpha,\nu}$. So either  $\varrho_{p_\beta,\nu} \triangleleft
  \varrho_{q_\alpha,\nu}$
  or  $\varrho_{p_\beta,\nu} \trianglerighteq  \varrho_{q_\alpha,\nu}$.
  However, since $\gamma_{\alpha,\nu} \in \rge(\varrho_{q_\alpha,\nu}) \setminus W_{<\alpha,\nu}$, only $\varrho_{q_\alpha,\nu} \triangleright \varrho_{p_\beta,\nu}$ is possible.
\item[(b)]  
  So for $\nu \in \splitt(q_\alpha)$, the set $\{ \varrho_{p_\beta,\nu} \such \beta \in \cU_{\alpha,\nu}(q_\alpha)\}$ has at most
  $\lg(\varrho_{q_\alpha,\nu})$ elements.

\item[(c)] 
  The assigment $\beta \mapsto \varrho_{p_\beta,\nu}$  is
  is defined between $\cU_{\alpha,\nu}(q_\alpha)$ and  $\{ \varrho_{p_\beta,\nu} \such \beta \in \cU_{\alpha,\nu}(q_\alpha)\}$. According to
  properties (e) and (f) in the induction hypothesis, the assigment is
  injective, and hence \\
$|\cU_{\alpha,\nu}(q_\alpha) |\leq \lg(\varrho_{q_\alpha,\nu})$.

\item[(d)] We state for further use that $\cU_{\alpha,\nu}(q_\alpha)$ is finite
  and for any $q \geq q_\alpha$, $\cU_{\alpha,\nu}(q) \subseteq \cU_{\alpha,\nu}(q_\alpha)$.

  \end{myrules}
\item[$(\oplus)_5$]
  We look at the cone above $q_\alpha$ and show:
  \begin{equation}\label{oplus5}
    \begin{split}
      &(\forall q \geq q_\alpha)(\forall \nu \in \splitt(q)) (\exists r_{\alpha,\nu}\geq_{\bQ^2_\kappa} q)\\
      &(\exists c \in [\set_q(\nu)]^\kappa)(\exists F \subseteq \{\eta \in \splitt(q) \such \eta \triangleright \nu\})\\
      & \bigl(r_{\alpha,\nu} = q(\nu,c,F)   
     \wedge (\forall \beta \in \cU_{\alpha,\nu}(q_\alpha))
     (r^{\la \nu \ra}_{\alpha,\nu} \perp p_\beta^{\la \nu \ra} \vee p_\beta^{\la\nu\ra} \leq r^{\la \nu \ra}_{\alpha,\nu})\bigr).
    \end{split}
  \end{equation}
 
  How do we find $r_{\alpha,\nu}= r_{\alpha,\nu}(q)$? Given $q \geq_{\bQ^2_\kappa} q_\alpha$, $\nu \in \splitt(q)$ we enumerate $\cU_{\alpha,\nu}(q_\alpha)$  as
  $\beta_0$, \dots, $\beta_{k-1}$.
  We let $r_{0} = q$ and by induction on $i \leq k$ we define $r_i$, increasing in strength,  with $\nu \in \splitt(r_i)$ and $c_i = \set_{r_i}(\nu)$.
  Thus the $c_i$ are $\subseteq$-decreasing sets of size $\kappa$.
 Given $r_i$, we distinguish cases:

 First case: $\beta_i \not\in \cU_{\alpha,\nu}(r_i)$. Then there is
 $c_{i+1} \in [\set_{r_i}(\nu)]^\kappa $, $c_{i+1} \cap \set_{p_{\beta_i}}(\nu) = \emptyset$. We let $r_{i+1} = r_i(\nu, c_{i+1})$ and thus have $r_{i+1}^{\la \nu \ra} \perp p_{\beta_i}$.

 Second case: $\beta_i \in \cU_{\alpha,\nu}(r_i)$.
 We let
 \[c_i = \{j \in \set_{r_i}(\nu) \such r_i^{\la \nu \concat \la j \ra \ra} \geq p_{\beta_i}^{\la \nu \concat \la j \ra \ra}\} \cup \{j \in \set_{r_i}(\nu) \such r_i^{\la \nu \concat \la j \ra \ra} \not\geq p_{\beta_i}^{\la \nu \concat \la j \ra \ra}\}.\]

If $c_{i,1} =   \{j \in \set_{r_i}(\nu) \such r_i^{\la \nu \concat \la j \ra \ra} \geq p_{\beta_i}^{\la \nu \concat \la j \ra \ra}\} $ has size $\kappa$, then we let $c_{i+1} = c_{1,i}$ and $r_{i+1} = r_i(\nu,c_{i+1})$ and thus get $r_{i+1}^{\la \nu \ra} \geq p_{\beta_i}$.

If $|c_{i,1}|< \kappa$, then $c_{i,2} =   \{j \in \set_{r_i}(\nu) \such r_i^{\la \nu \concat \la j \ra \ra} \not\geq p_{\beta_i}^{\la \nu \concat \la j \ra \ra}\} $ has size $\kappa$,
\nothing{
either $r_i^{\la \nu \ra} \geq p_{\beta_i}^{\la \nu \ra}$, in which case we let $c_{i+1} = c_i$,  or there is
  $c_{i+1} \in [c_i]^\kappa$ and there are fronts $F_{\nu,i,j}$ in $r_{i+1}$ above $\nu \concat \la j \ra $ for $j \in c_{i+1}$ such that 
  $r_{i+1} = q(\nu,c_{i+1}, F_{\nu, i})$ fulfils $r_{i+1}^{\la \nu \ra} \perp p_{\beta_i}^{\la \nu \ra}$. In the end we take $r_{\alpha,\nu} = r_k$.
}
and we let $c_{i+1} = c_{i,2}$.
For $j \in c_{i+1}$,  $r_i^{\la \nu\concat \la j \ra\ra} \not\geq p_{\beta_i}^{\la \nu \concat \la j\ra \ra}$. Thus we
  can find a node in the $r_i^{\la \nu\concat \la j \ra\ra}\setminus  p_{\beta_i}^{\la\nu \concat \la j\ra\ra}$ and above this node we find a splitting node of $r_i$. We take this latter splitting node
  into $r_{i+1}$ as the direct successor splitting node to $\nu \concat \la j \ra$. Doing so for every $j \in c_{i+1}$ we get $F_{\nu,i}$, a front
  strictly above $\nu$ in $r_{i+1}= r_i(\nu,c_{i+1}, F_{\nu,i})$.
  Again we get $r_{i+1}^{\la \nu \ra} \perp p_{\beta_i}$.

  In the end we let $r_{\alpha,\nu} = r_k$. There is a front $F$
  that contains for each $j \in c_k$  the shortest splitting node of $r_k$ above $\nu \concat \la j \ra$. Thus we have $r_k = r_{\alpha,\nu}= q(\nu,c_k, F)$
  and $r_{\alpha,\nu}$ fulfils \eqref{oplus5}.

\item[$(\oplus)_6$] Now we use $(\oplus)_5$ iteratively along all $\nu \in \kappa^{<\kappa} $ to find
  a fusion sequence $\la r_{\alpha,\nu},\nu,c_\nu, F_{\nu} \such \nu < \kappa^{<\kappa}\ra$  with starting point $q_\alpha = r_{0,\nu_0}$. In this sequence, $r_{\alpha, \nu} $ is chosen as $r_{\alpha,\nu}(q)$ in $\oplus_5$ for $q = \bigcap_{\beta < \alpha} r_\beta$, if $\nu \in \splitt(q)$. If $\nu \not\in \splitt(q)$, then  $r_{\alpha,\nu} = q$.
  Then we apply the fusion Lemma \ref{fusionstark2} and get an upper bound $r_\alpha$ of $r_{\alpha,\nu}$, $\nu \in {}^{\kappa>}\kappa$.
Note $r_\alpha^{\la \nu \ra} \perp p_\beta$ iff  $r_\alpha^{\la \nu \ra} \perp p_\beta^{\la \nu \ra}$ and 
$r_\alpha^{\la \nu \ra} \geq p_\beta$ iff  $r_\alpha^{\la \nu \ra} \geq p_\beta^{\la \nu \ra}$.
  Hence $r_\alpha \geq q_\alpha$ and 
  \[
  (\forall \nu \in \splitt(r_\alpha))( \forall \beta \in \cU_{\alpha,\nu}(q_\alpha))
  (r_\alpha^{\la \nu \ra} \perp p_\beta \vee p_\beta \leq r_\alpha^{\la \nu \ra}).
  \]
\item[$(\oplus)_7$] Finally we choose $n_\alpha$ and $p_\alpha$.
There are $k$ and $\nu$ such that $n < \omega$ and $ \nu \in \splitt(r_\alpha)$ 
such that $p_\alpha = r_\alpha^{\la \nu \ra}$ fulfils
\[(\forall \beta < \alpha)( n_\beta \geq  k \rightarrow p_\alpha \perp p_\beta).
\]
Proof of existence. By induction on $k \in \omega$ we try to find
 $\la \nu_k, \beta_k \such k \in \omega \ra$ such that
\begin{myrules}
\item[(a)] $\nu_k \in \splitt(r_\alpha)$,
\item[(b)] $\nu_k \triangleleft \nu_m$ for $k <m$,
\item[(c)] $\beta_k < \alpha$ and $n_{\beta_k} \geq k$ and $r_\alpha^{\la \nu_k \ra} \geq p_{\beta_k}$.
\end{myrules}
If we succeed, then $\nu_\ast=\bigcup \{\nu_k \such k \in \omega\} = \nu^*
\in \splitt(r_\alpha)$ by Definition~\ref{1.1} (2).
Here we use that $\cf(\kappa) > \omega$. Hence
\begin{equation*}
  \begin{split}
    r_\alpha^{\la \nu^* \ra} & \in Q_{\cT} \cap \bigcap \{D_k \such k<\omega\} \mbox{ and}\\
a_{\eta_{r_\alpha, \nu^*}} & \mbox{ determines in }\Vdash_{\bQ^1_\kappa}
\mbox{ for any }k < \omega \mbox{ the value of }\name{\tau}\rest k.
\end{split}\end{equation*}
This is a contradiction.

So there is a smallest $k$ such that $\nu_k$ cannot be defined.
We let $n_\alpha= k$.
We let $p_\alpha$ be a strengthening of $r_\alpha^{\la \nu_{k-1} \ra}$ such
that $p_\alpha \in D_{n_\alpha}$. For finding such a strengthening
we again invoke the fusion Lemma~\ref{fusionstark}. 

We show that $p_\alpha \perp p_\beta$ for $\beta < \alpha$ with $n_\beta \geq k$.
Otherwise, having arrived at $r_\alpha^{\la \nu_{k-1}\ra}$ we find some $\beta_k , \alpha$ such that $n_{\beta_k} \geq k$ and $r_\alpha^{\la \nu_{k-1}\ra} $ is compatible
  with $p_{\beta_k}$. Then we can prolong $\nu_{k-1}$ to a splitting node
  $\nu_k \in \splitt(p_{\beta_k}) \cap \splitt(r_{\alpha})$. By the choice of $r_\alpha$ the latter implies that $r_{\alpha}^{\la \nu_k \ra} \geq p_{\beta_k}$. However, now we would have found $\nu_k, \beta_k$ as required in contradiction to the choice of $k$.
  \end{myrules}
  \end{proof}  
\begin{remark}
  Conditions (a) to (c) of  Lemma \ref{long2} yield:
  For any $k <\omega$,
  \[\{p_\alpha \such n_\alpha \geq k\} \mbox{ is dense in } \bQ^2_\kappa.\]
\end{remark}
\begin{proof} Let $k$ and $p $ be given. There is
  $\alpha_0 $ such that $T_{\alpha_0} \in D_0$ and $T_{\alpha_0} \geq_{\bQ^2_\kappa} p$.
  Then $p_{\alpha_0} \geq T_{\alpha_0}$ and $n_{\alpha_0}$.
 Then there is
 $\alpha_1> \alpha_0$ such that $T_{\alpha_1} \geq_{\bQ^2_\kappa} p_{\alpha_0}$.
 Then $p_{\alpha_1} \geq T_{\alpha_1}$ and hence
 by condition (c), $n_{\alpha_1} >n_{\alpha_0}\geq 0$.
 We can
  can repeat the argument $k-1$ times.
\end{proof}

  Now we drop the component $\bar{\gamma}_\alpha$  from a sequence
  $\la p_\alpha, n_\alpha, \bar{\gamma}_\alpha \such \alpha < 2^\kappa \ra$
  given by Lemma~\ref{long2}. Then we get a sequence with properties (a), (b), and a weakening (c)  with the property stated in the remark.

\begin{lemma}\label{short}  We assume that $\bQ^1_\kappa$ collapses $2^\kappa$ to $\omega$, $\cf(\kappa) > \omega$ and $2^{(2^{<\kappa})} = 2^\kappa$.
Let 
  $\la T_{\alpha}  \such \alpha < 2^\kappa\ra$ enumerate all Miller trees that such each tree appears $2^\kappa$ times.
  If $\la (p_\alpha, n_\alpha) \such \alpha < 2^\kappa\ra$  are
  such that
\begin{myrules}
\item[(a)] $n_\alpha < \omega$,
\item[(b)]
  $p_\alpha \in D_{n_\alpha}$ and $p_\alpha \geq T_\alpha$, 
\item[(c)] if $\beta < \alpha$ and $n_\beta = n_\alpha$ then $p_\beta \perp p_\alpha$,
\item[(d)]   for any $k \in \omega$,
  $\{p_\alpha \such n_\alpha \geq k\} $ is dense in $\bQ^2_\kappa$.
  \end{myrules}
Then there is a $\bQ^2_\kappa$-name
  $\name{\tau}'$ for a surjection of $\omega$ onto $2^\kappa$.
  \end{lemma}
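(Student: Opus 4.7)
The plan is to define $\name{\tau}'$ so that each condition $p_\alpha$ forces $\name{\tau}'(n_\alpha)$ to equal a preassigned label $g(\alpha)\in 2^\kappa$; clause (c) makes $A_n:=\{p_\alpha:n_\alpha=n\}$ an antichain, so this is internally consistent. The essential point is to arrange $g$ so that every pair $(q,\gamma)$ with $q\in\bQ^2_\kappa$ and $\gamma<2^\kappa$ is witnessed by some $\alpha$ with $p_\alpha\geq q$ and $g(\alpha)=\gamma$; once this is done, surjectivity reduces to a density argument.

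To construct $g$, I would use that each Miller tree appears $2^\kappa$ times in the enumeration $\la T_\alpha:\alpha<2^\kappa\ra$. For each $q\in\bQ^2_\kappa$ enumerate $\{\alpha<2^\kappa : T_\alpha=q\}$ injectively as $\{\alpha(q,\xi):\xi<2^\kappa\}$, and set $g(\alpha):=\xi$ where $\alpha=\alpha(T_\alpha,\xi)$. By clause (b), $p_\alpha\geq T_\alpha$, so for any $q$ and any $\gamma<2^\kappa$ the index $\alpha:=\alpha(q,\gamma)$ satisfies $p_\alpha\geq q$ and $g(\alpha)=\gamma$. For each $n<\omega$ extend the antichain $A_n$ to a maximal antichain $\bar A_n$ of $\bQ^2_\kappa$, and define
\[
\name{\tau}'(n) \;:=\; \{(\check{g(\alpha)},p_\alpha) : \alpha<2^\kappa,\ n_\alpha=n\}\ \cup\ \{(\check 0,p) : p\in \bar A_n\setminus A_n\}.
\]
Since the $p_\alpha\in A_n$ are pairwise incompatible and $\bar A_n$ is maximal, this is a well-formed $\bQ^2_\kappa$-name and $\bQ^2_\kappa\Vdash\name{\tau}'(n)\in 2^\kappa$, so $\name{\tau}'$ names a function $\omega\to 2^\kappa$.

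Finally I verify surjectivity. Fix $\gamma<2^\kappa$ and $q\in\bQ^2_\kappa$; by the design of $g$ there is $\alpha$ with $p_\alpha\geq q$ and $g(\alpha)=\gamma$, so $p_\alpha$ lies in $A_{n_\alpha}$ and forces $\name{\tau}'(n_\alpha)=\check\gamma$, hence forces $\check\gamma\in\rge(\name{\tau}')$. Thus the set of conditions forcing $\check\gamma\in\rge(\name{\tau}')$ is dense in $\bQ^2_\kappa$, and by genericity $\gamma\in\rge(\name{\tau}'[G])$ for every $\gamma<2^\kappa$. The only nontrivial step is the construction of $g$, and the hypothesis that every Miller tree appears $2^\kappa$ times in the enumeration is precisely what makes it work; clauses (a), (b), (c) play their natural roles, while (d) is not used in this step — it is automatic for the sequence produced by Lemma~\ref{long2} and is recorded because it is convenient elsewhere.
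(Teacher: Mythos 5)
Your proof is correct and follows essentially the same route as the paper's: the key device in both is to exploit the $2^\kappa$-fold repetition in the enumeration $\langle T_\alpha \such \alpha<2^\kappa\rangle$ to label the indices so that every pair consisting of a condition and a target value in $2^\kappa$ is realized by some $p_\alpha$, after which surjectivity is a routine density argument. The only differences are cosmetic: the paper first names the index $\alpha$ itself (via $p_\alpha\Vdash\name{\tau}(n_\alpha)=\alpha$) and then composes with a relabelling function $f$, whereas you assign the labels $g(\alpha)$ directly and pad the antichains $A_n$ to maximal ones so that the name is total --- which, as you correctly note, is why you can dispense with clause (d), which the paper uses only to see that the domain of its partial name is infinite.
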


\proof
Let $G$ be a $\bQ^2_\kappa$-generic filter over ${\bV}$.
We define $\name{\tau}(n)$, a $\bQ^2_\kappa$-name by $\name{\tau}(n)[G] = 
\alpha$ if $p_\alpha \in G$ and $n_\alpha = n$.
The name $\name{\tau}$ is a name of a function  by (c).
By (d), the domain of $\name{\tau}$ is forced to be infinite.
For any $p \in \bQ^2_\kappa$ we let $U_p = \{ \alpha \such T_\alpha= p\}$.
$U_p$ is of size $2^\kappa$, in particular for
$\alpha \in 2^\kappa$ we have  $|U_{p_\alpha}|= 2^\kappa$.
Hence there is $f \colon 2^\kappa \to 2^\kappa$ such that
for any $\alpha, \gamma \in 2^\kappa$ and $\exists
\beta \in U_{p_\gamma}$ with $f(\beta)
= \alpha$. We let $\name{\tau'}(n) = f(\name{\tau}(n))$.
Next we show
\[\bQ^2_\kappa \Vdash \rge(\name{\tau'}) = 2^\kappa.
\]
Suppose $p \in Q_{\cT}$  and $\alpha < 2^\kappa$ are given.
By construction the sequence
$\{p_\beta \such \beta < 2^\kappa\}$ is dense.
Let $p \leq p_\gamma$. Then there is  $\beta \in U_{p_\gamma}$, with $f(\beta) = \alpha$. However, 
$\beta \in U_{p_\gamma}$ means $T_\beta = p_\gamma  \leq p_\beta$
by construction. By the definition of $\name{\tau}$,
$p_\beta \Vdash \name{\tau}(n_\beta) = \beta$, so
$p_\beta \Vdash f(\name{\tau}(n_\beta)) = \alpha$.
\proofend

So we can sum up:

\begin{theorem}\label{main}
  We assume that $\bQ^1_\kappa$ collapses $2^\kappa$ to $\omega$ and $\cf(\kappa) > \omega$ and $2^{(\kappa^{<\kappa})} = 2^\kappa$.
  Then the forcing with
 $\bQ_\kappa^2$ collapses $2^\kappa$ to $\aleph_0$.
\end{theorem}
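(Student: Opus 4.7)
The plan is essentially to chain together the lemmas already proved in the previous section; the theorem is a summary statement and most of the substantive work has been packaged into Lemmas~\ref{inserted}, \ref{QcTdense}, \ref{long2}, and \ref{short}.

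First I would unpack the hypothesis that $\bQ^1_\kappa$ collapses $2^\kappa$ to $\omega$. This gives a $\bQ^1_\kappa$-name $\name{\tau}$ for a surjection $\omega \to 2^\kappa$, and via Lemma~\ref{inserted} (which only needs that $\cf(2^\kappa) > \omega$, hence is covered by our hypotheses) a labelled tree $\cT = \la (a_\eta, n_\eta, \varrho_\eta) \such \eta \in {}^{\omega>}(2^\kappa)\ra$ coding the name. From $\cT$ I would then form the suborder $Q_{\cT} \subseteq \bQ^2_\kappa$ of Definition~\ref{QcT}, which by Lemma~\ref{QcTdense} is dense in $\bQ^2_\kappa$. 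Thus it suffices to produce inside $Q_{\cT}$ a mechanism transferring the collapsing surjection coded by $\cT$ into a $\bQ^2_\kappa$-name.

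Next I would fix an enumeration $\la T_\alpha \such \alpha < 2^\kappa\ra$ of $\bQ^2_\kappa$ in which every Miller tree appears $2^\kappa$ times; this uses $2^{(\kappa^{<\kappa})} = 2^\kappa$, so that $|\bQ^2_\kappa| = 2^\kappa$. Invoking Lemma~\ref{long2} (whose hypotheses $\cf(\kappa) > \omega$ and $2^{(\kappa^{<\kappa})} = 2^\kappa$ are exactly what we assume) yields a sequence $\la (p_\alpha, n_\alpha, \bar{\gamma}_\alpha) \such \alpha < 2^\kappa \ra$ with properties (a)--(f). Discarding the auxiliary labels $\bar{\gamma}_\alpha$ leaves $\la (p_\alpha, n_\alpha) \such \alpha < 2^\kappa\ra$ satisfying clauses (a), (b), (c) of Lemma~\ref{short}. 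The remark following Lemma~\ref{long2} supplies clause~(d): for every $k < \omega$ the set $\{p_\alpha \such n_\alpha \geq k\}$ is dense in $\bQ^2_\kappa$.

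Finally I would apply Lemma~\ref{short} to this sequence; it produces a $\bQ^2_\kappa$-name $\name{\tau}'$ with $\bQ^2_\kappa \Vdash \name{\tau}' \colon \omega \to 2^\kappa$ is onto. This is exactly the statement that $\bQ^2_\kappa$ collapses $2^\kappa$ to $\aleph_0$, completing the proof. There is no hidden obstacle left at this stage, since the subtle points (the interaction between $Q_{\cT}$ and the fusion apparatus, the use of $\cf(\kappa) > \omega$ to conclude that limits of splitting nodes would lie in $\bigcap_n D_n$, and the counting argument $|\cU_{\alpha,\nu}(q_\alpha)| \leq \lg(\varrho_{q_\alpha,\nu})$) have all been absorbed into the preceding lemmas; the only mild bookkeeping step is verifying that dropping the $\bar{\gamma}_\alpha$ component genuinely preserves (a)--(c) of Lemma~\ref{short} and that the remark indeed gives (d), both of which are direct.
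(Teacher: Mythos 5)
Your proposal is correct and matches the paper's own (implicit) argument: the paper introduces the theorem with ``So we can sum up,'' intending exactly the chain Lemma~\ref{inserted} $\to$ Lemma~\ref{QcTdense} $\to$ Lemma~\ref{long2} $\to$ the density remark $\to$ Lemma~\ref{short} that you describe, including the observation that dropping $\bar{\gamma}_\alpha$ and weakening clause~(c) from $n_\beta \geq n_\alpha$ to $n_\beta = n_\alpha$ is harmless. No gaps.
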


  \section{$\kappa$-Cohen reals and the Levy collapse}
\label{S4}

Another vice of a $\kappa$-tree forcing is to add $\kappa$-Cohen reals.
In this section we show that under the above conditions,
$\bQ^\kappa_2$ adds Cohen reals and is equivalent to the Levy collapse
of $2^\kappa$ to $\aleph_0$.

\begin{lemma} If $\bP$ collapses $2^\kappa$ to $\aleph_0$, $\cf(\kappa) > \aleph_0$, and $2^{2^{<\kappa}} = 2^\kappa$, then $\bQ_\kappa^2$ adds a
$\kappa$-Cohen real.
\end{lemma}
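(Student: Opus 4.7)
The plan is to construct, below each $p_0\in Q_\cT$, a $\bQ^2_\kappa$-name $\dot c$ for a $\kappa$-Cohen real, and to verify Cohen-genericity by a fusion argument inside the dense subforcing $Q_\cT$ of Lemma~\ref{QcTdense}. These local names can be pieced together along a maximal antichain of $Q_\cT$ to yield a single $\bQ^2_\kappa$-name forced by $0_{\bQ^2_\kappa}$ to name a $\kappa$-Cohen real.

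First I would define $\dot c$ below $p_0\in Q_\cT$. For each $\nu\in\splitt(p_0)$, the set $\set_{p_0}(\nu)=a_{\eta_\nu}$ has size $\kappa$; partition it by parity of position in its increasing enumeration into two pieces $a_{\eta_\nu}^{(0)},a_{\eta_\nu}^{(1)}$, each of cardinality $\kappa$. Let $\dot g$ name the generic branch and enumerate the splitting nodes of $p_0$ along $\dot g$ in order of length as $\langle \dot s_\alpha:\alpha<\kappa\rangle$. Define
\[
 \dot c(\alpha)=i\iff \dot g(\lg(\dot s_\alpha))\in a_{\eta_{\dot s_\alpha}}^{(i)}.
\]

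Next I would verify Cohen-genericity. Given $p\leq_{\bQ^2_\kappa} p_0$ in $Q_\cT$ and $\sigma\colon\gamma\to 2$ with $\gamma<\kappa$, I build $q\geq p$ forcing $\sigma\triangleleft\dot c$ via a fusion sequence matching Lemma~\ref{fusionstark}. Enumerate $\kappa^{<\kappa}$ as $\langle\nu_\beta:\beta<\kappa^{<\kappa}\rangle$ respecting \eqref{gut}. At stage $\beta$, if $\nu_\beta$ is a splitting node of $p_\beta$ at splitting depth $\alpha<\gamma$ in the partial tree, set $c_\beta:=\set_{p_\beta}(\nu_\beta)\cap a_{\eta_{\nu_\beta}}^{(\sigma(\alpha))}$, which is of size $\kappa$; otherwise set $c_\beta:=\set_{p_\beta}(\nu_\beta)$. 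The fusion limit $q=p_{\kappa^{<\kappa}}$ lies in $\bQ^2_\kappa$ by Lemma~\ref{fusionstark}, and along every branch of $q$ the first $\gamma$ splitting nodes of $p_0$ have their continuation restricted to $a_{\eta_{\cdot}}^{(\sigma(\alpha))}$, so that $q\Vdash \sigma\triangleleft\dot c$. For a $\bV$-dense open $D\subseteq{}^{<\kappa}2$, extend any already-decided initial fragment of $\dot c$ to some $\tau\in D$ and apply the construction.

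The main obstacle is maintaining stable splitting depths across the fusion: a splitting node $\nu_\beta$ at splitting depth $\alpha<\gamma$ must still be at depth $\alpha$ in every intermediate $p_{\beta'}$ and in the final $q$, so that the restriction performed at stage $\beta$ genuinely determines $\dot c(\alpha)$ along every surviving branch. This stability follows from \eqref{gut} together with the property of Lemma~\ref{fusionstark} that each $\nu_\beta$ is processed at most once and that successor sets are only shrunk (never enlarged or augmented with spurious splitting nodes at depth $\leq\gamma$). The hypothesis $\cf(\kappa)>\omega$ is essential for the closure clause of Definition~\ref{1.1}(2), hence for Lemma~\ref{fusionstark} to preserve $\kappa$-superperfectness at limit stages.
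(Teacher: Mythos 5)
Your proposal takes a route that is fundamentally different from the paper's, and it does not work. The paper does not build a name by reading bits off the generic branch at all: it observes that $\bC(\kappa)=({}^{<\kappa}2,\trianglelefteq)$ has at most $2^{2^{<\kappa}}=2^\kappa$ dense subsets in $\bV$, so after forcing with $\bQ^2_\kappa$ (which collapses $2^\kappa$ to $\aleph_0$) the ground model supplies only countably many dense sets to meet; in $\bV[G]$ one then runs a trivial $\omega$-step induction producing a $\trianglelefteq$-increasing sequence $\la\eta_n\such n<\omega\ra$ of ground-model conditions meeting all of them, and the filter it generates is $\bC(\kappa)$-generic over $\bV$. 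The Cohen real comes from the collapse, not from the tree structure. A useful sanity check that your branch-reading strategy cannot be repaired: classical Miller forcing adds no Cohen real, and your argument nowhere uses the collapse hypothesis (nor $2^{2^{<\kappa}}=2^\kappa$), so if it worked it would prove too much.

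Concretely, the genericity verification breaks in two places. First, a condition $p\geq p_0$ may pass through a splitting node $\nu$ of $p_0$ without splitting there, i.e.\ $|\suc_p(\nu)|<\kappa$; such $\nu$ still occurs on the generic branch and still contributes a bit of $\dot c$, but that bit is decided by $p$ and cannot be steered. Your fusion skips exactly these nodes (clause (b2) of Lemma~\ref{fusionstark} leaves $p_\beta$ unchanged when $\nu_\beta\notin\splitt(p_\beta)$), so the claim that $q$ forces $\sigma\triangleleft\dot c$ is false: uncontrolled positions are interleaved with the controlled ones. Second, even at $\nu\in\splitt(p)$ with $p\in Q_\cT$ one has $\set_p(\nu)=a_{\eta_{p,\nu}}$ where $\eta_{p,\nu}$ is in general a proper $\triangleleft$-extension of $\eta_{p_0,\nu}$; the set $a_{\eta_{p,\nu}}$ is merely some $\kappa$-sized subset of $a_{\eta_{p_0,\nu}}$ and may be almost contained in one of your two parity classes, so $c_\beta=\set_{p_\beta}(\nu_\beta)\cap a^{(\sigma(\alpha))}_{\eta_{\nu_\beta}}$ can have size $<\kappa$ and the fusion step is not available. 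Putting these together one can build $p\geq p_0$ whose branches pass, between any two of its own splitting nodes, through a $p_0$-splitting node whose surviving successors all lie in the class coding $1$, and whose own splitting nodes never sit at adjacent $p_0$-splitting depths; below such a $p$ no extension forces an initial segment of $\dot c$ into the dense open set $\{\tau\such(\exists\delta)(\tau(\delta)=\tau(\delta+1)=0)\}$. So $\dot c$ is not a name for a $\kappa$-Cohen real. You should discard the name $\dot c$ and instead derive the Cohen real from the collapse as above.
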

\proof
Let $G $ be $\bQ^2_\kappa$-generic over $\bV$. Let $f \colon \omega \to 2^{<\kappa}$ be a function in ${\bf V}[G]$, such that 
$(\forall \eta \in 2^{<\kappa})(\exists^\infty k f(k) = \eta)$.
Such a function exists since $2^{< \kappa} \leq 2^\kappa$.  

Since $2^{2^{<\kappa}} = 2^\kappa$, we can enumerate all antichains in $\bC(\kappa)$ in $\alpha_\ast \leq 2^\kappa$ many steps.
In $\bV[G]$, $\alpha_*$ is countable. We list it as $\la \alpha_n \such n < \omega \ra$. Now we choose $\eta_n \in \bC(\kappa)^{\bV}$ by induction on $n$ in $\bV[G]$: 
$\eta_0 = \emptyset$. Given $\eta_n$ we choose 
$k_n$ such that $f(k_n) = \eta_n$ and then we choose $\eta_{n+1} \trianglerighteq \eta_n$, such that $\eta_{n+1} \in I_{\alpha_n}$. Then $\{\eta \such (\exists  n < \omega) (\eta \trianglelefteq f(k_n))\}$ is a $\bC(\kappa)$-generic filter over $\bV$ and it exists in $V[G]$, since it it definable from $\{f(k_n) \such n < \omega\}$.
\proofend

Two forcings $\bP_1$, $\bP_2$ are said to be equivalent if
their regular open algebras $\RO(\bP_i)$ coincide
(for a definition of the regular open algebra of a poset, see, e.g., \cite[Corollary 14.12]{Jech3}). Some forcings are characterised up to equivalence
just by their size and their collapsing behaviour. 

\begin{definition}
  Let $B$ be a Boolean algebra. We write $B^+ = B \setminus \{0\}$.
  A subset $D \subseteq B^+$ is called \emph{dense} if $(\forall b \in B^+)(\exists d \in D)(d \leq b)$.
The \emph{density} of a Boolean algebra $B$ is the least size of a dense subset
of $B$. A Boolean algebra $B$ has uniform density if for every $a \in B^ +$, $B \rest a$ has the same density.
The \emph{density} of a forcing order $(\bP,<)$ is the density of the
regular open algebra $\RO(\bP)$.
\end{definition}

\begin{lemma}\label{levy1} \cite[Lemma 26.7]{Jech3}. Let $(Q, <)$ be a notion of forcing such that $|Q| = \lambda > \aleph_0$ and
such that $Q$ collapses $\lambda$ onto $\aleph_0$ , i.e.,
\[0_{Q} \Vdash_Q |\check{\lambda}| = \aleph_0.\]
Then ${\rm RO}(Q) = \Levy(\aleph_0,\lambda)$.
\end{lemma}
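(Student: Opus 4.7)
The plan is to verify that $\RO(Q)$ satisfies the characterization of $\Levy(\aleph_0,\lambda)$ as the unique (up to isomorphism) atomless complete Boolean algebra of uniform density $\lambda$ which forces $\check{\lambda}$ to be countable. Granting that characterization theorem, which is proved by a standard back-and-forth construction on maximal antichains below conditions and which I would invoke as a black box, the work reduces to checking three properties of $\RO(Q)$.

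First I would show $\RO(Q)$ is atomless. If $a$ were an atom, then $Q$ below $a$ would be forcing-equivalent to the trivial one-element order, so $a$ would force $V[\dot G]=V$; but $a \leq 0_Q$ still forces $\check{\lambda}$ to be countable, contradicting $\lambda > \aleph_0$ in $V$.

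Second I would pin down the density. Since $Q \hookrightarrow \RO(Q)$ densely and $|Q|=\lambda$, the density of $\RO(Q)$ is at most $\lambda$. For the lower bound, suppose towards contradiction that some dense $D \subseteq \RO(Q)^+$ has $|D|=\mu<\lambda$. Let $\dot f$ be a $Q$-name such that $0_Q \Vdash \dot f \colon \omega \to \check{\lambda}$ is onto, and for each $n<\omega$ pick a maximal antichain $A_n \subseteq D$ deciding $\dot f(n)$, say $p \Vdash \dot f(n)=\alpha_{n,p}$ for $p \in A_n$. Then $\lambda = \bigcup_{n,p} \{\alpha_{n,p}\}$, so $\lambda \leq \sum_{n<\omega}|A_n|\leq \mu \cdot \aleph_0 = \mu$, contradiction. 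To promote this to uniform density, fix $a \in \RO(Q)^+$. The algebra $\RO(Q)\rest a$ has size at most $\lambda$, is atomless, and satisfies $0_{\RO(Q)\rest a}=a \Vdash \check{\lambda}$ is countable, so the same counting argument applied to $\RO(Q)\rest a$ yields density exactly $\lambda$ below every $a$.

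Finally I would conclude by invoking the characterization theorem: both $\RO(Q)$ and $\Levy(\aleph_0,\lambda)$ are atomless complete Boolean algebras of uniform density $\lambda$ collapsing $\lambda$ to $\aleph_0$, hence they are isomorphic. The main obstacle is the characterization/uniqueness theorem itself, which is the substance of Jech's Lemma 26.7 and is proved by a delicate back-and-forth matching of maximal antichains; the verification of the bullets above is routine from the hypothesis that $0_Q \Vdash |\check{\lambda}|=\aleph_0$.
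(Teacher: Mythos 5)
The paper offers no proof of this lemma at all: it is quoted as Lemma 26.7 of Jech and used as a black box, so in substance your proposal takes the same route --- defer the real work to a known uniqueness theorem for the collapse algebra. Your routine verifications are correct: the atom argument is fine, and the counting argument (maximal antichains inside a putative dense set of size $\mu<\lambda$ deciding the values of a collapsing name, giving $\lambda\le\mu\cdot\aleph_0$) correctly yields uniform density $\lambda$. The one thing to be aware of is that the ``characterization theorem'' you invoke is interderivable with the lemma itself in a few lines: any complete Boolean algebra of density $\lambda$ collapsing $\lambda$ to $\aleph_0$ has a dense subset of size $\lambda$ to which the lemma applies, and conversely your three bullet points derive the hypotheses of your characterization from those of the lemma. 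So the entire mathematical content (Jech's construction of a dense tree of maximal antichains indexed by $\lambda^{<\omega}$, or equivalently the Balcar--Simon $(\aleph_0,\lambda)$-nowhere-distributivity criterion that the paper records separately) sits inside your black box, exactly as it sits inside the paper's citation; your added verifications, while correct, do not constitute independent progress toward the theorem. That is acceptable here, since the statement is explicitly presented as a quoted result rather than something the paper proves.
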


\nothing{
\begin{lemma}\cite[Corollary 26.8 (Kripke)]{Jech3}. If $B$ is a complete Boolean algebra and $|B| \leq\lambda$
then $B$ embeds as a complete subalgebra into the algebra $\Levy(\aleph_0,\lambda)$.
\end{lemma}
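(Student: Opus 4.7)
The plan is to realize $B$ as a complete subalgebra of the regular open algebra of a forcing notion to which Lemma~\ref{levy1} applies, and then to identify that regular open algebra with $\Levy(\aleph_0,\lambda)$ via Lemma~\ref{levy1} itself. Concretely, I would take the product forcing $Q := B^+ \times \Levy(\aleph_0,\lambda)$, ordered componentwise (with stronger conditions smaller, in Jech's convention).

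First I would verify $|Q|=\lambda$. The upper bound $|Q| \leq |B|\cdot |\Levy(\aleph_0,\lambda)| \leq \lambda\cdot \lambda = \lambda$ follows from the hypothesis on $B$ together with the standard computation $|\Levy(\aleph_0,\lambda)| = \lambda^{<\omega}=\lambda$, and the inclusion $\{1_B\}\times \Levy(\aleph_0,\lambda)\subseteq Q$ yields the matching lower bound $|Q|\geq \lambda$. Next I would check that $Q$ collapses $\lambda$ onto $\aleph_0$: if $H$ is $Q$-generic over $V$, then the projection of $H$ to the second coordinate is $\Levy(\aleph_0,\lambda)$-generic over $V$ (maximal antichains of the factor lift to predense subsets of the product in the usual way), and so $V[H]$ contains a $\Levy(\aleph_0,\lambda)$-generic extension of $V$ in which $\lambda$ is countable. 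With these two hypotheses verified, Lemma~\ref{levy1} applied to $Q$ yields $\RO(Q) = \Levy(\aleph_0,\lambda)$ as complete Boolean algebras.

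It then remains to exhibit a complete embedding $B \hookrightarrow \RO(Q)$. For this I would invoke the standard fact that each factor of a product of forcings embeds completely into the product: the map $e\colon B^+ \to Q$, $b \mapsto (b, 1_{\Levy(\aleph_0,\lambda)})$, is order-preserving, and for any maximal antichain $A\subseteq B^+$ and any $(b',q')\in Q$ one picks $b\in A$ with $b\wedge b'\neq 0$ in $B$; then $(b\wedge b',q')$ extends both $e(b)$ and $(b',q')$, so $e[A]$ is predense in $Q$. Hence $e$ is a complete embedding of posets, which extends canonically to a complete embedding of regular open algebras $B=\RO(B^+)\hookrightarrow\RO(Q)=\Levy(\aleph_0,\lambda)$, where the equality $B=\RO(B^+)$ uses that $B$ was already assumed complete. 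This is the desired conclusion.

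The main obstacle is really only the verification of two bookkeeping items about product forcing: that the projection of a product-generic filter to a factor is factor-generic (used in the collapsing hypothesis), and that the inclusion of a factor into a product is a complete embedding of posets (used in lifting $e$ to regular open algebras). Both are standard, but without stating them carefully one gets only a regular subalgebra, not a \emph{complete} one; once they are in place Lemma~\ref{levy1} delivers the substantive content and the corollary follows immediately.
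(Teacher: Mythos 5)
Your proof is correct. Note that the paper itself offers no proof of this statement: it is quoted from Jech as Kripke's result (and in the source it even sits inside the \verb+\nothing{}+ macro, so it is excised from the compiled text). Your derivation is the standard one and, as far as I can tell, essentially Jech's own route from Lemma~26.7 (the paper's Lemma~\ref{levy1}) to Corollary~26.8: form $Q=B^+\times\Levy(\aleph_0,\lambda)$, check $|Q|=\lambda$ and $0_Q\Vdash|\check\lambda|=\aleph_0$ via the projection to the second factor, conclude $\RO(Q)=\Levy(\aleph_0,\lambda)$, and embed $B=\RO(B^+)$ completely via $b\mapsto(b,1)$. The two bookkeeping facts you flag (genericity of the projection, and that a factor embeds completely into a product, preserving order, incompatibility, and predensity of maximal antichains) are exactly the right things to check, and your verification of them is sound; the only cosmetic mismatch is that you use Jech's ordering convention while the paper takes stronger conditions to be larger.
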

}
\begin{lemma} If
  $\bQ_\kappa^1$ collapses $2^\kappa$ to $\aleph_0$,
then $\bQ_\kappa^1$ is equivalent of $\Levy(\aleph_0, 2^\kappa)$.
\end{lemma}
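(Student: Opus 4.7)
The plan is to reduce to a direct application of Lemma~\ref{levy1}. First I would note that $|\bQ^1_\kappa| = |[\kappa]^\kappa| = 2^\kappa$, which is uncountable since $\kappa$ is uncountable. Setting $\lambda = 2^\kappa$, the hypothesis that $\bQ^1_\kappa$ collapses $2^\kappa$ to $\aleph_0$ is precisely the collapsing clause of Lemma~\ref{levy1}. Thus Lemma~\ref{levy1} applies and yields $\RO(\bQ^1_\kappa) = \Levy(\aleph_0, 2^\kappa)$, which is the desired forcing equivalence.

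To address any implicit homogeneity or uniformity requirement one might extract from the cited lemma, I would record a simple homogeneity observation: for any $a \in [\kappa]^\kappa$, any bijection $\pi \colon a \to \kappa$ induces an order-isomorphism between the cone $\bQ^1_\kappa \rest a = \{b \in [\kappa]^\kappa \such b \subseteq a\}$ and $\bQ^1_\kappa$ itself. Consequently every cone has size $2^\kappa$ and collapses $2^\kappa$ to $\aleph_0$, so the Boolean value $\|2^\kappa = \aleph_0\|_{\RO(\bQ^1_\kappa)}$ equals $1$. Moreover $\RO(\bQ^1_\kappa)$ has uniform density equal to $2^\kappa$: the inequality $|\bQ^1_\kappa|=2^\kappa$ gives density $\leq 2^\kappa$, while Observation~\ref{obs2} produces an antichain of size $2^\kappa$, forcing equality; homogeneity propagates this to every cone.

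The only real obstacle is bookkeeping: verifying that the exact formulation of Jech's lemma cited as Lemma~\ref{levy1} really applies to $\bQ^1_\kappa$ without hidden side conditions. The homogeneity remark above dispatches the standard side conditions one would have to verify (Boolean value $1$ of the collapsing statement, uniform density, and size), so once Lemma~\ref{levy1} is invoked with $\lambda = 2^\kappa$ there is nothing left to do.
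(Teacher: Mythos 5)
Your proof is correct and is essentially the paper's own argument: note $|\bQ^1_\kappa|=|[\kappa]^\kappa|=2^\kappa>\aleph_0$ and apply Lemma~\ref{levy1} with $\lambda=2^\kappa$, the collapsing hypothesis being exactly the required clause $0_{\bQ^1_\kappa}\Vdash|\check{2^\kappa}|=\aleph_0$ under the paper's convention for $\bP\Vdash\varphi$. The additional homogeneity and uniform-density remarks are harmless but not needed, since the cited formulation of Jech's lemma has no such side conditions.
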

\proof $\bQ^1_\kappa$ has size $2^\kappa$. Hence Lemma \ref{levy1} yields ${\rm RO}(\bQ^1_\kappa)= {\rm Levy}(\aleph_0,2^\kappa)$. 
\proofend

\begin{definition}
  A Boolean algebra is \emph{$(\theta,\lambda)$-nowhere distributive} if there are antichains $\bar{p}^\eps = \la p^\eps_\alpha \such \alpha < \alpha_\eps\ra$ of $\bP$ for $\eps < \theta$ such that for every $p \in \bP$ for some $\eps < \theta$
  \[|\{\alpha < \alpha_\eps \such p \not\perp p^\eps_\alpha\}| \geq \lambda.\]
\end{definition}

\begin{lemma}\label{balcar} \cite[Theorem 1.15]{BalcarSimon89}
Let  $\theta < \lambda$ be regular cardinals.
  \begin{myrules}
  \item[(1)] Suppose that  $\bP$ has the following properties (a) to (c).
    \begin{myrules}
    \item[(a)] $\bP$ is a $(\theta,\lambda)$-nowhere distributive forcing notion,
    \item[(b)] $\bP$ has density $\lambda$,
    \item[(c)] in case $\theta > \aleph_0$, $\bP$ has a $\theta$-complete subset
      $S$. The latter means: $(\forall B \in [S]^{<\theta})(\exists s\in S)(\forall b \in B)(b \leq_{\bP} s)$.
    \end{myrules}
    \emph{Then} $\bP$ is equivalent to $\Levy(\theta,\lambda)$.
  \item[(2)] Under (a) and (b) $\bP$ collapses $\lambda$ to $\theta$
    (and may or may not collapse $\aleph_0$).
  \end{myrules}
  \end{lemma}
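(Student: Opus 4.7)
\medskip
\noindent
\textbf{Proof plan.}
The plan for part (2) is to use the antichains $\bar{p}^\eps$ directly to read off a surjection $\theta \twoheadrightarrow \lambda$ in the extension. First I would extend each $\bar p^\eps = \la p^\eps_\alpha \such \alpha < \alpha_\eps\ra$ to a \emph{maximal} antichain $\bar q^\eps = \la q^\eps_\alpha \such \alpha < \beta_\eps \ra$, keeping the original entries at their original indices; $(\theta,\lambda)$-nowhere distributivity is preserved because the witnesses for every $p$ stay intact. In $V[G]$, where $G$ is $\bP$-generic, define a $\bP$-name $\name{f}$ by letting $\name{f}(\eps)$ be the unique $\alpha$ with $q^\eps_\alpha \in G$. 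To see $\bP \Vdash \rge(\name{f}) \supseteq \lambda$, fix $p \in \bP$ and $\beta < \lambda$. By hypothesis (a) there is some $\eps < \theta$ with $|\{\alpha \such p \not\perp q^\eps_\alpha\}| \geq \lambda$, so I can pick $\alpha \geq \beta$ with $p \not\perp q^\eps_\alpha$; strengthening $p$ to some $p' \leq q^\eps_\alpha$ gives $p' \Vdash \name{f}(\eps) = \alpha \geq \beta$. Standard bookkeeping (rearranging indices via a bijection $\beta_\eps \to \lambda$ for those $\eps$ where $\beta_\eps \geq \lambda$, and discarding the rest) upgrades this to a genuine surjection onto $\lambda$, completing (2).

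For part (1), with (2) already proved, the target is an isomorphism $\RO(\bP) \cong \Levy(\theta,\lambda)$. The strategy is the uniqueness characterization of the Levy algebra: $\Levy(\theta,\lambda)$ is, up to isomorphism, the only complete Boolean algebra that is (i) $\theta$-complete, (ii) of uniform density $\lambda$, and (iii) $(\theta,\lambda)$-nowhere distributive. Property (i) for $\RO(\bP)$ with $\theta > \aleph_0$ comes from the $\theta$-complete dense subset $S$ of hypothesis (c); for $\theta=\aleph_0$ it is automatic. Property (iii) is hypothesis (a). For (ii), I would observe that $(\theta,\lambda)$-nowhere distributivity is inherited by every relativization $\bP \rest p$, because the restricted antichains $\la p^\eps_\alpha \wedge p \such p^\eps_\alpha \not\perp p\ra$ still witness the property below $p$; combined with hypothesis (b), this gives density exactly $\lambda$ below every element.

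Once (i)--(iii) are verified for $\RO(\bP)$, I would build the isomorphism by a back-and-forth argument along a well-ordering of length $\lambda$ of dense subsets on both sides, matching maximal antichains level by level. At successor stages the uniform density $\lambda$ supplies enough room to extend partial matchings; at limits of cofinality $<\theta$, hypothesis (c) (or $\theta$-completeness of $\RO(\bP)$) is used to take meets and continue.

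\medskip
\noindent
\textbf{Main obstacle.}
The tricky part is the back-and-forth in (1): one must ensure that the partial isomorphism built through $\lambda$ stages always admits an extension, which requires simultaneously controlling density below arbitrary elements and the $\theta$-completeness of the ambient algebra. In particular, showing that $\RO(\bP)$ has \emph{uniform} density $\lambda$ (rather than just density $\lambda$) is where hypotheses (a) and (b) must be combined carefully; without uniformity the matching collapses at some stage. In the citation \cite[Theorem 1.15]{BalcarSimon89} this is exactly the point handled by their general technology, so in practice I would invoke their theorem directly rather than reprove the isomorphism from scratch.
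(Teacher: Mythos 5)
The paper does not prove this lemma at all; it is quoted verbatim from Balcar--Simon, so there is no in-paper argument to compare against. Judged on its own terms, your proposal for part (2) has a genuine gap. The argument you give --- pick $\eps$ with $|\{\alpha \such p \not\perp q^\eps_\alpha\}| \geq \lambda$, then choose a compatible $q^\eps_\alpha$ with index $\alpha \geq \beta$ --- only shows that $\rge(\name{f})$ is forced to be \emph{unbounded} in $\lambda$, i.e., that $\cf(\lambda) \leq \theta$ in the extension. That is strictly weaker than collapsing $\lambda$ to $\theta$, and no reindexing of the antichains repairs it: to force a \emph{prescribed} $\beta$ into the range you must arrange that, below every condition, the labels of the compatible antichain members exhaust all of $\lambda$, and a single labelling has to do this for all conditions simultaneously. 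Note that your argument never uses hypothesis (b), which cannot be right: Prikry forcing at a measurable $\lambda$ is $(\aleph_0,\lambda)$-nowhere distributive (the maximal antichains deciding the $n$-th Prikry point witness this), yet it preserves all cardinals; it merely has density $2^\lambda > \lambda$. The standard repair is exactly where (b) enters: fix a dense set $D = \{d_\xi \such \xi < \lambda\}$, for each $\xi$ choose $\eps(\xi)$ with $B_\xi = \{a \in A_{\eps(\xi)} \such a \not\perp d_\xi\}$ of size $\lambda$, pass to a disjoint refinement $B'_\xi \in [B_\xi]^\lambda$ of the $\lambda$-many sets $B_\xi$, and label each $A_\eps$ so that it maps each $B'_\xi$ onto $\lambda$; then below any $p$ one finds $d_\xi \leq p$ and hits every target value via $A_{\eps(\xi)}$.

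For part (1) your outline (verify $\theta$-completeness from (c), uniform density $\lambda$ from (a) plus (b) by relativizing the antichains, then quote or reprove the uniqueness characterization of the collapse algebra) is the right shape, and the relativization argument for uniform density is correct. But as written it ultimately defers to the very theorem being proved, so it does not constitute an independent proof; since the paper itself only cites \cite[Theorem~1.15]{BalcarSimon89}, that is defensible, but the defect in part (2) should still be fixed or likewise delegated to the citation.
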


\begin{proposition}
If there is a $\kappa$-mad family of size $2^\kappa$ the forcing
$\bQ^1_\kappa$ is $(\aleph_0,2^\kappa)$-nowhere distributive.
\end{proposition}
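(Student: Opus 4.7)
The natural starting point is to leverage the given $\kappa$-mad family $\cA=\{A_\alpha\such \alpha<2^\kappa\}$ to produce $\aleph_0$ maximal antichains in $\bQ^1_\kappa$ by refinement. I would build a sequence of $\kappa$-mad families $\cA_n\subseteq [\kappa]^\kappa$ of size $2^\kappa$ as follows. Set $\cA_0=\cA$. Given $\cA_n$, for each $B\in \cA_n$ transfer the hypothesis through a bijection $B\cong\kappa$ to obtain a $\kappa$-mad family $\cA_n^B$ on $B$ of size $2^\kappa$, and set $\cA_{n+1}=\bigcup_{B\in\cA_n}\cA_n^B$. One verifies routinely that $\cA_{n+1}$ is a $\kappa$-almost disjoint family (elements from distinct $B$'s by almost-disjointness of $\cA_n$, elements inside a common $B$ by $\cA_n^B$), is $\kappa$-maximal (using outer maximality then inner maximality), and has size $2^\kappa$. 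Thus each $\bar p^n:=\cA_n$ is a maximal antichain in $\bQ^1_\kappa$ of size $2^\kappa$.

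For the witness property, fix $p\in [\kappa]^\kappa$ and set $\cA_n(p)=\{B\in\cA_n\such |p\cap B|=\kappa\}$. The goal is to find $n$ with $|\cA_n(p)|\geq 2^\kappa$. Assume for contradiction that $|\cA_n(p)|<2^\kappa$ for every $n$. Then by $\kappa$-maximality of $\cA_n$, $p\setminus\bigcup \cA_n(p)$ cannot have size $\kappa$ (it would be $\kappa$-almost disjoint from every member of $\cA_n$), so $p\subseteq^*_\kappa\bigcup\cA_n(p)$ for each $n$. The sets $\cA_n(p)$ assemble into a tree of height $\omega$: any $B'\in\cA_{n+1}(p)$ sits inside a unique $B\in\cA_n(p)$, forcing $B\in\cA_n(p)$. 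So $p$ is concentrated on the branches of this tree of refinements.

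The main obstacle is to derive a contradiction from the concentration. The plan is to argue that along the construction we may choose the inner mad families $\cA_n^B$ carefully so that no $\omega$-branch of the tree has $\kappa$-large intersection: because $|B|=\kappa$ and $\cA_n^B$ has $2^\kappa$ members, one can arrange (by fixing an auxiliary $\kappa$-almost disjoint decomposition of $B$ along a well-chosen coordinate) that for every decreasing sequence $B=B_0\supseteq B_1\supseteq\cdots$ of successive elements, $|\bigcap_n B_n|<\kappa$. Under this arrangement, $p\subseteq^*_\kappa\bigcup\{\bigcap_n B_n\such (B_n)\text{ a branch}\}$. The number of branches is at most $\mu^{\aleph_0}$, where $\mu=\sup_n |\cA_n(p)|<2^\kappa$, so under the paper's standing cardinal arithmetic ($2^{2^{<\kappa}}=2^\kappa$, etc.\ and $\cf(\kappa)>\omega$ from the ambient context) the union has size $<\kappa$, contradicting $|p|=\kappa$.

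I expect the hardest step to be precisely this last one: controlling the branches of the refinement tree. If the naive refinement is insufficient, one may need to weave in a second ingredient at each level, for example by fixing a partition $\kappa=\bigsqcup_{n<\omega} I_n$ with $|I_n|=\kappa$ and intersecting each $\cA_n^B$ with an appropriate $I_n$, exploiting $\cf(\kappa)>\omega$ to guarantee that any $p\in[\kappa]^\kappa$ meets some $I_n$ in a set of size $\kappa$. With the branch-intersection control in place, pigeonholing on finitely many $n$'s against the $2^\kappa$ members of $\cA(p)$ in the easy case $|\cA(p)|=2^\kappa$ is immediate, and the iterated construction handles the residual case when $p$ is concentrated on fewer than $2^\kappa$ members of $\cA$.
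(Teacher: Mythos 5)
There is a genuine gap, and it sits exactly where you predict: the final counting step. Your strategy --- iteratively refining the given mad family into maximal antichains $\cA_n$ and deriving a contradiction from $|\cA_n(p)|<2^\kappa$ for all $n$ --- does not close. You only know $\mu=\sup_n|\cA_n(p)|<2^\kappa$, so $\mu$ may well be $\geq\kappa$ (say $\mu=\kappa$ or $\mu=\kappa^+$); then $\mu^{\aleph_0}\geq\kappa$, and a union of $\mu^{\aleph_0}$ many branch intersections each of size $<\kappa$ can perfectly well have size $\kappa$, so nothing contradicts $|p|=\kappa$. There is also a secondary gap just before that: a point $x\in p\cap\bigcap_n\bigcup\cA_n(p)$ need not lie on a single branch of the refinement tree, since it may lie in non-nested members of $\cA_n(p)$ at different levels (distinct members of $\cA_n$ can still share $<\kappa$ points), and the K\"onig-type argument needed to extract a branch through $x$ fails when the levels $\cA_n(p)$ are infinite. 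So even the covering of $p$ by branch intersections is not established.

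The paper avoids all of this by not attempting to derive the needed density property a posteriori from levelwise maximality. It uses the labelled tree $\cT=\la a_\eta \such \eta\in{}^{\omega>}(2^\kappa)\ra$ of Lemma~\ref{inserted}, whose construction is a transfinite recursion of length $2^\kappa$ enumerating \emph{all} of $[\kappa]^\kappa$ and guided by a $\bQ^1_\kappa$-name for a surjection of $\omega$ onto $2^\kappa$ (the mad family enters only to guarantee, via Theorem~\ref{861} and Observation~\ref{obs2}, that such a name exists). Clause (d) of that lemma states that every $a\in[\kappa]^\kappa$ contains some $a_\eta$; granted this, the proposition is immediate: the levels $\bar p^n=\{a_\eta\such\eta\in{}^{n}(2^\kappa)\}$ are antichains, and any $p\in[\kappa]^\kappa$ contains some $a_\eta$ and is therefore compatible with all $2^\kappa$ sets $a_{\eta\concat\la\beta\ra}$, which lie in the single level $\lg(\eta)+1$. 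To repair your argument you would have to build this ``every $p$ contains a node of the tree'' property directly into the recursive construction, i.e.\ diagonalize against all $2^\kappa$ candidate conditions $p$, rather than only controlling the intersections along branches.
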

\begin{proof}
  Lemma \ref{inserted}  gives $\cT$ such that  
  $\bar{p}^n = \{ a_\eta \such \eta \in {}^n (2^\kappa)\}$, $n \in \omega$,
  witnesses $(\aleph_0,2^\kappa)$-nowhere distributivity.
\end{proof}

By Lemma~\ref{levy1} and Theorem~\ref{main} we get:

\begin{proposition}
  If $\bQ^1_\kappa$ collapses $2^\kappa$ to $\aleph_0$, $\cf(\kappa) > \aleph$ and
  and $2^{(\kappa^{<\kappa})} = 2^\kappa$ then $\bQ^2_\kappa$ is equivalent to ${\rm Levy}(\aleph_0,2^\kappa)$.
\end{proposition}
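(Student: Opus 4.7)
The plan is to read this off directly from Lemma~\ref{levy1} (Jech's collapsing lemma), by verifying its two hypotheses for $Q = \bQ^2_\kappa$ and $\lambda = 2^\kappa$: namely $|\bQ^2_\kappa| = 2^\kappa$ and $\bQ^2_\kappa \Vdash |\check{2^\kappa}| = \aleph_0$. The second of these is exactly the content of Theorem~\ref{main}, which applies since the present hypotheses are a strengthening of those of Theorem~\ref{main}. So the only real work is the cardinality count.

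For the upper bound on $|\bQ^2_\kappa|$: every condition is a subset of ${}^{\kappa>}\kappa$, and $|{}^{\kappa>}\kappa| = \kappa^{<\kappa}$, so
\[
|\bQ^2_\kappa| \leq 2^{\kappa^{<\kappa}} = 2^\kappa
\]
by the hypothesis $2^{(\kappa^{<\kappa})} = 2^\kappa$. For the lower bound, since $\bQ^2_\kappa$ collapses $2^\kappa$ to $\aleph_0$ it cannot have the $2^\kappa$-c.c., and hence contains an antichain of size $2^\kappa$; in particular $|\bQ^2_\kappa| \geq 2^\kappa$. Alternatively, one can exhibit $2^\kappa$ pairwise distinct $\kappa$-superperfect trees directly (e.g.\ by varying the value attached to a single splitting choice), which gives the bound without invoking Theorem~\ref{main} a second time.

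With $|\bQ^2_\kappa| = 2^\kappa > \aleph_0$ and $0_{\bQ^2_\kappa} \Vdash |\check{2^\kappa}| = \aleph_0$ in hand, Lemma~\ref{levy1} applies and yields
\[
\RO(\bQ^2_\kappa) = \Levy(\aleph_0, 2^\kappa),
\]
which is precisely the asserted equivalence. There is no real obstacle here: the substantive content is packaged in Theorem~\ref{main}, and the rest is a one-line size count followed by citation of the lemma. The only mild care needed is to confirm that the hypothesis $2^{(\kappa^{<\kappa})} = 2^\kappa$ is the correct form of the cardinal arithmetic assumption used to bound $|\bQ^2_\kappa|$, which it is, since the number of subtrees of ${}^{\kappa>}\kappa$ is controlled by $2^{\kappa^{<\kappa}}$.
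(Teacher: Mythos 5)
Your proposal is correct and follows exactly the paper's route: the paper derives this proposition by citing Theorem~\ref{main} for the collapse together with Lemma~\ref{levy1}, leaving the cardinality count $|\bQ^2_\kappa|=2^\kappa$ implicit, which you supply correctly via $|{}^{\kappa>}\kappa|=\kappa^{<\kappa}$ and the hypothesis $2^{(\kappa^{<\kappa})}=2^\kappa$. (Minor quibble: the hypotheses here are not a strengthening of those of Theorem~\ref{main} but identical to them.)
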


\nothing{
\begin{lemma}
   $\Levy(\aleph_0, 2^{<\kappa})$
adds a $\kappa$-Cohen real. 
\end{lemma}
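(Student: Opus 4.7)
The plan is to apply the universal property of the L\'evy collapse recorded in Lemma~\ref{levy1} in order to exhibit the $\kappa$-Cohen forcing $\bC(\kappa) = ({}^{\kappa>}2, \triangleleft)$ as a complete subforcing of $\Levy(\aleph_0, 2^{<\kappa})$. Once such a complete embedding is in hand, a $\Levy(\aleph_0, 2^{<\kappa})$-generic filter over $\bV$ automatically contains, by pullback, a $\bC(\kappa)$-generic filter, which is exactly a $\kappa$-Cohen real.

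First I would form in $\bV$ the product $\bP := \bC(\kappa) \times \Levy(\aleph_0, 2^{<\kappa})$. One checks $|\bP| = 2^{<\kappa} \cdot 2^{<\kappa} = 2^{<\kappa}$, and this cardinal is uncountable since $\kappa > \omega$ forces $2^{<\kappa} \geq 2^{\aleph_0}$. Moreover, any $\bP$-generic decomposes as $G \times H$ with $H$ being $\Levy(\aleph_0, 2^{<\kappa})$-generic over $\bV$, so $H$ already collapses $(2^{<\kappa})^\bV$ to $\aleph_0$ inside $\bV[H] \subseteq \bV[G \times H]$. Thus $\bP$ fulfils the hypotheses of Lemma~\ref{levy1} with $\lambda = 2^{<\kappa}$, and one concludes $\RO(\bP) \cong \Levy(\aleph_0, 2^{<\kappa})$ as complete Boolean algebras.

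Second, the canonical map $p \mapsto (p, 0_{\Levy(\aleph_0, 2^{<\kappa})})$ exhibits $\bC(\kappa)$ as a complete subforcing of $\bP$, this being the standard fact that each factor embeds completely into a product (order and incompatibility are preserved, and for any $(p',q') \in \bP$ the element $p' \in \bC(\kappa)$ serves as a reduction). Composing this embedding with the isomorphism from the previous step yields $\bC(\kappa) \lessdot \Levy(\aleph_0, 2^{<\kappa})$, and the lemma follows.

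The only delicate point is to apply Lemma~\ref{levy1} to $\bP$ in its raw form, of size exactly $2^{<\kappa}$, rather than to some Boolean completion whose size could be as large as $2^{2^{<\kappa}}$ and would violate the cardinality hypothesis. Beyond this bookkeeping the argument is purely formal, and in particular no further assumption on $\cf(\kappa)$ or on cardinal arithmetic is needed.
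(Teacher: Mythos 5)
Your argument is correct, but it takes a genuinely different route from the paper's. The paper proves the (slightly more general) statement for $\Levy(\theta,2^{<\kappa})$ with $\theta=\cf(\theta)<\kappa$ by an explicit coding: fix an enumeration $\{\varrho_\alpha \such \alpha<2^{<\kappa}\}$ of $2^{<\kappa}$, send a collapse condition $\nu$ to the concatenation $\varrho(\nu)$ of the $\varrho_{\nu(\delta)}$, $\delta\in\dom(\nu)$, observe that for every dense $D\subseteq\bC(\kappa)$ the set $D'=\{\nu \such \varrho(\nu)\in D\}$ is dense in the collapse, and conclude that $\{p \such (\exists\nu\in G)(p\trianglelefteq\varrho(\nu))\}$ is $\bC(\kappa)$-generic. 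You instead run the standard absorption argument: apply the characterization of the collapse algebra (Lemma~\ref{levy1}) to the product $\bC(\kappa)\times\Levy(\aleph_0,2^{<\kappa})$, which has size $2^{<\kappa}>\aleph_0$ and collapses $2^{<\kappa}$ to $\aleph_0$ below every condition, and then use that a factor embeds completely into a product. All hypotheses of Lemma~\ref{levy1} are indeed verified, your remark about applying it to the poset itself rather than to its completion is the right bookkeeping, and the reduction argument for the factor embedding is standard; so the proof stands (in effect you are reproving Kripke's theorem that $\Levy(\aleph_0,\lambda)$ absorbs every forcing of size at most $\lambda$, a statement the paper records but does not use). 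What your route buys is brevity and generality in one direction: any poset of size at most $2^{<\kappa}$ is absorbed, not just $\bC(\kappa)$. What the paper's route buys is elementarity (no regular open algebras) and the generalization to $\Levy(\theta,2^{<\kappa})$ for uncountable regular $\theta<\kappa$, where the absorption argument would additionally require a $\theta$-closedness hypothesis as in Lemma~\ref{balcar}, while the concatenation argument only needs $\cf(\kappa)>\theta$ to keep $\varrho(\nu)$ of length below $\kappa$.
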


Indeed, something more holds
\begin{lemma}\label{d5} 
If $\theta= \cf(\theta) < \kappa$ then the forcing with $\Levy(\theta, 2^{<\kappa})$ adds a $\kappa$-Cohen real.
  \end{lemma}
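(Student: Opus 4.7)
The plan is to observe that this proposition is essentially the combination of Theorem~\ref{main} with Lemma~\ref{levy1}: the only genuinely new content is a cardinality count for $\bQ^2_\kappa$. So the two tasks are (i) verify $|\bQ^2_\kappa| = 2^\kappa$ using the hypothesis $2^{\kappa^{<\kappa}} = 2^\kappa$, and (ii) quote the two named results.

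First I would handle the cardinality. For the upper bound, every $p \in \bQ^2_\kappa$ is a subset of ${}^{\kappa>}\kappa$, a set of size $\kappa^{<\kappa}$, so $|\bQ^2_\kappa| \leq 2^{\kappa^{<\kappa}} = 2^\kappa$. For the lower bound I would exhibit $2^\kappa$ pairwise distinct superperfect trees. Fix a partition $\kappa = B \sqcup C$ with $|B|=|C|=\kappa$ and, for each $A \subseteq B$, let
\[
T_A = \{\la\ra\} \cup \{\la i\ra \concat s \such i \in A \cup C,\ s \in {}^{\kappa>}\kappa\}.
\]
Each $T_A$ is $\kappa$-superperfect: every node extends to a splitting node with $\kappa$-many immediate successors, and the limit-of-splitting-nodes clause from Definition~\ref{1.1}(2) is automatic because above $\la i\ra$ the tree is the full tree. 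Distinct $A$'s yield distinct trees (look at $\set_{T_A}(\la\ra) = A \cup C$), giving $2^{|B|} = 2^\kappa$ conditions. Combined with the upper bound, $|\bQ^2_\kappa| = 2^\kappa$.

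Next, Theorem~\ref{main} applies verbatim: its three hypotheses ($\bQ^1_\kappa$ collapses $2^\kappa$ to $\omega$, $\cf(\kappa) > \omega$, and $2^{\kappa^{<\kappa}} = 2^\kappa$) are precisely those of the present proposition, so we already have $\bQ^2_\kappa \Vdash |\check{2^\kappa}| = \aleph_0$. Finally I would invoke Lemma~\ref{levy1} with $Q = \bQ^2_\kappa$ and $\lambda = 2^\kappa > \aleph_0$: $|Q| = \lambda$ and $Q$ collapses $\lambda$ onto $\aleph_0$, so $\RO(\bQ^2_\kappa) = \Levy(\aleph_0, 2^\kappa)$, which is exactly the claimed forcing equivalence.

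I do not expect any real obstacle: all of the heavy lifting (the transfer from $\bQ^1_\kappa$ to $\bQ^2_\kappa$, the fusion arguments, the construction of the collapsing name) has already been done in Section~\ref{S3}, and the Jech lemma does the algebraic bookkeeping. The only thing to double-check is that the cardinality computation uses only the stated cardinal-arithmetic hypothesis, which it does.
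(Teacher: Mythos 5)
Your proposal does not prove the statement at hand. Lemma~\ref{d5} is about the Levy collapse $\Levy(\theta,2^{<\kappa})$ for an \emph{arbitrary} regular $\theta<\kappa$, and asserts that this forcing adds a $\kappa$-Cohen real. What you prove instead is that $\RO(\bQ^2_\kappa)=\Levy(\aleph_0,2^\kappa)$, i.e., the paper's separate proposition that $\bQ^2_\kappa$ is equivalent to the Levy collapse of $2^\kappa$ to $\aleph_0$. Neither $\bQ^2_\kappa$ nor Theorem~\ref{main} appears in the statement of Lemma~\ref{d5}, and nothing in your argument produces a $\kappa$-Cohen generic, let alone from a $\Levy(\theta,2^{<\kappa})$-generic filter; note also that the parameters do not match ($\theta$ need not be $\aleph_0$, and the set being collapsed is $2^{<\kappa}$, not $2^\kappa$). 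Your cardinality computation $|\bQ^2_\kappa|=2^\kappa$ and the appeal to Lemma~\ref{levy1} are correct as far as they go, but they establish a different result, so as a proof of Lemma~\ref{d5} the proposal has no content.

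The argument the paper gives is a direct transfer of genericity and is essentially self-contained. Enumerate $2^{<\kappa}$ as $\{\varrho_\alpha\such\alpha<2^{<\kappa}\}$, and for a condition $\nu\in{}^{<\theta}(2^{<\kappa})$ let $\varrho(\nu)$ be the concatenation of the $\varrho_{\nu(\delta)}$, $\delta\in\dom(\nu)$; since $\lg(\nu)<\theta<\kappa$ and each block has length $<\kappa$, one has $\varrho(\nu)\in{}^{\kappa>}2$. Given a $\Levy(\theta,2^{<\kappa})$-generic filter $G$, put $H(G)=\{p\such(\exists\nu\in G)(p\trianglelefteq\varrho(\nu))\}$. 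For any dense open $D\subseteq\bC_\kappa$ from the ground model, the set $D'=\{\nu\such\varrho(\nu)\in D\}$ is dense in $\Levy(\theta,2^{<\kappa})$, because any condition $\nu$ can be extended by one more coordinate whose label $\varrho_\alpha$ is chosen so that $\varrho(\nu)\concat\varrho_\alpha$ lands in $D$ (here one uses that the enumeration is onto $2^{<\kappa}$). Hence $H(G)$ is $\bC_\kappa$-generic over $\bV$ and lies in $\bV[G]$. None of the machinery of Section~\ref{S3} is needed; if you want to salvage part of your write-up, it belongs to the proposition on the equivalence of $\bQ^2_\kappa$ with $\Levy(\aleph_0,2^\kappa)$, not here.
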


\proof 
Let $\theta= \cf(\theta) < \kappa$. Let $\{\varrho_\alpha \such \alpha < 2^{<\kappa}\}$ list $2^{<\kappa}$.
For $\nu \in {}^{<\theta} (2^{<\kappa})$ we let $\varrho(\nu)$ be the concatenation of the $\varrho_{\nu(\delta)}$,  $\delta \in \dom(\nu)$.
Let $G$ be a ${\rm Levy}(\theta, 2^{<\kappa})$-generic filter.
We define a filter $H(G)  \subseteq \bC_\kappa$ as follows:
\[p \in H(G) \leftrightarrow (\exists \nu \in G)(p \trianglelefteq \varrho(\nu)).\]
It is clear that $H(G)$ is a filter. We show that it is $\bC_\kappa$ generic.
Let $D \subseteq \bC_\kappa$ be in the ground model and be open and  dense.
Let $D' = \{ \nu \in {\rm Levy}(\theta, 2^{<\kappa}) \such \varrho(\nu) \in D\}$.
$D'$ is dense in ${\rm Levy}(\theta, 2^{<\kappa})$. Hence there is a
$\nu \in G \cap D'$.
\proofend
}

{\bf Acknowledgement}: We thank Marlene Koelbing for pointing out a gap in an earlier version.
\def\cprime{$'$} \def\germ{\frak} \def\scr{\cal} \ifx\documentclass\undefinedcs
  \def\bf{\fam\bffam\tenbf}\def\rm{\fam0\tenrm}\fi 
  \def\defaultdefine#1#2{\expandafter\ifx\csname#1\endcsname\relax
  \expandafter\def\csname#1\endcsname{#2}\fi} \defaultdefine{Bbb}{\bf}
  \defaultdefine{frak}{\bf} \defaultdefine{=}{\B} 
  \defaultdefine{mathfrak}{\frak} \defaultdefine{mathbb}{\bf}
  \defaultdefine{mathcal}{\cal}
  \defaultdefine{beth}{BETH}\defaultdefine{cal}{\bf} \def\bbfI{{\Bbb I}}
  \def\mbox{\hbox} \def\text{\hbox} \def\om{\omega} \def\Cal#1{{\bf #1}}
  \def\pcf{pcf} \defaultdefine{cf}{cf} \defaultdefine{reals}{{\Bbb R}}
  \defaultdefine{real}{{\Bbb R}} \def\restriction{{|}} \def\club{CLUB}
  \def\w{\omega} \def\exist{\exists} \def\se{{\germ se}} \def\bb{{\bf b}}
  \def\equivalence{\equiv} \let\lt< \let\gt>


\end{document}